
%
%

\documentclass[10pt]{article}
\usepackage{amsmath}
\usepackage[titletoc,title]{appendix}
\usepackage{amsbsy}
\usepackage{amssymb}
\usepackage{amsthm}
\usepackage{anyfontsize}
\usepackage{wrapfig}
\usepackage{color,cancel}
\usepackage{fullpage}
\usepackage{float}
\usepackage{ulem}
\usepackage{breqn}
\usepackage{graphicx}
\usepackage{caption}
\usepackage{subcaption}
\usepackage{bbm}
\usepackage{calrsfs}
\usepackage{verbatim}
\usepackage{url} 
\pdfoutput=1

\usepackage{amsmath,amsfonts,amssymb,color,graphicx}

\newtheorem{theorem}{Theorem}[section]
\newtheorem{Algorithm}{Algorithm}[section]
\newtheorem{corollary}{Corollary}[section]
\newtheorem{lemma}{Lemma}[section]

\newtheorem{definition}{Definition}[section]
\newtheorem{remark}{Remark}[section]

\numberwithin{equation}{section}
\newcommand{\norm}[1]{\left\Vert#1\right\Vert}

\newcommand{\bu}{{\bf u}}
\newcommand{\bv}{{\bf v}}
\newcommand{\bw}{{\bf w}}
\newcommand{\bx}{{\bf x}}
\newcommand{\be}{{\bf e}}
\newcommand{\bff}{{\bf f}}

\newcommand{\bphi}{{\boldsymbol \phi}}

\newcommand{\bfX}{{\bf X}}
\newcommand{\bfV}{{\bf V}}

\newcommand{\bfeta}{{\boldsymbol \eta}}

\newcommand{\bchi}{{\boldsymbol \chi}}

\usepackage{color}
\usepackage[section]{placeins}
\makeatletter
\AtBeginDocument{%
	\expandafter\renewcommand\expandafter\subsection\expandafter{%
		\expandafter\@fb@secFB\subsection
	}%
}

\date{}

\title{An analysis of a linearly extrapolated BDF2 subgrid artificial viscosity method for incompressible flows  }

\author{
Medine Demir
\thanks{Department of Mathematics, Middle East Technical University, 06800 Ankara, Turkey; dmedine@metu.edu.tr}
\and
Song\"{u}l Kaya
\thanks{
Department of Mathematics, Middle East Technical University, 06800 Ankara, Turkey; smerdan@metu.edu.tr.}
}

\begin{document}

\maketitle

\textbf{Abstract.} This report extends the mathematical support of a subgrid artificial viscosity (SAV) method to simulate the incompressible Navier-Stokes equations to better performing a linearly extrapolated BDF2 (BDF2LE) time discretization. The method considers the viscous term as a combination of the vorticity and the grad-div stabilization term. SAV method introduces global stabilization by adding a term, then anti-diffuses through the extra mixed variables. We present a detailed analysis of conservation laws, including both energy and helicity balance of the method. We also show that the approximate solutions of the method are unconditionally stable and optimally convergent. Several numerical tests are presented for validating the support of the derived theoretical results.




\textbf{Keywords:} subgrid artificial viscosity model, higher order, finite element method, Navier-Stokes equations, linearly extrapolated BDF2\\

\section{Introduction}


Incompressible viscous fluid flows are expressed by the Navier-Stokes equations (NSE), which are given as follows:
\begin{equation}\label{nse}
\begin {array}{rcll}
\bu_t -\nu\triangle \bu+\bu\cdot \nabla \bu+ \nabla p &=& \bf f &
\mathrm{in }\ \Omega\times (0,T), \\
\nabla \cdot \bu&=& 0& \mathrm{in }\ \Omega\times  (0,T],\\
\bu&=& \mathbf{0}& \mathrm{in }\ \partial\Omega\times[0,T],\\
\bu(\bx,0)&=& \bu_{0}(\bx) & \mathrm{for}\  \bx \in \Omega.
\end{array}
\end{equation}
Here $\bu$ represents the velocity, $p$ the zero-mean pressure, $\bff$ an external force and $\nu$ the kinematic viscosity. It is well known that when Reynolds number gets higher, the range of scales expands, as a result computational cost also increases too much. Thus, a direct discretization of equation (\ref{nse}) such as by the Galerkin finite element method, remains incapable to simulate turbulent flows. One successful approach to model NSE is variational multiscale (VMS) methods, whose aim is to design stabilized finite element methods. The main idea of VMS includes defining large scales with projection into appropriate function spaces. VMS methods have been developed by \cite{HMJ,JLG} with the motivation of the inherent multiscale structure of the solution.  There are various classes and realizations of VMS for different types of fluid problems, see \cite{V17} for an overview. 


Due to the proven good theoretical and practical properties of VMS methods, it is natural to broaden its understanding by developing efficient, accurate and stable numerical algorithms. The method we consider in this paper is first proposed in \cite{L} which is in fact VMS method, for finding solutions to the convection-dominated convection diffusion equation. The VMS method of \cite{L} introduces global stabilization by adding a term, then anti-diffuses through the extra mixed variables which are chosen as the large scales of solution. Since the effective artificial viscosity type stabilization influences only the small scales, it can be thought as a subgrid artificial viscosity (SAV) method. Based on these ideas, in page 156 of \cite{L}, the new formulation of SAV has been proposed without any numerical analysis.  In this formulation, the stability process is applied to the viscous term by using the vector identity $\Delta \bu=-\nabla\times(\nabla\times \bu)+\nabla(\nabla\cdot{\bu})$ to reduce the extra storage in $3d$. As a result, a two-level method is obtained that combines both vorticity and the grad-div stabilization in the viscous term. This SAV method greatly reduces extra storage compared to velocity and its gradient. SAV method was first analyzed in the study of \cite{G} by using Crank-Nicholson (CN) scheme. As noted in \cite{G}, the system is improved in a better way without choosing computationally inefficient time step, that is, the system of method includes just three variables with the use of coarse grid of vorticity instead of nine variables for the full velocity gradient. Thus, the method significantly improves the solution of the system in case of a small viscosity without choosing computationally inefficient time step. The current paper extends the mathematical idea of mixed, conforming SAV finite element method to the  multistep BDF2 times discretization. Since it exhibits strong stability and damping properties that are better than those of CN for the simulation of underresolved regimes, BDF2 is one of the most popular choice of time stepping scheme \cite{28}. We note that the backward differentiation formula is a class of time-stepping scheme which has been commonly used and studied for the time dependent ordinary and partial differential equations \cite{ 20, 21, 19, 22, 24, 26, 25, 23, 27}. In light of the previous literature, herein we consider a successful SAV stabilization scheme to be used with linearly extrapolated BDF2 (BDF2LE) formula in time without hurting accuracy. 

This paper carefully considers several physical and mathematical questions concerning SAV solutions, and it is arranged as follows. In Section 4, we first investigate the conservation of the fundamental integral variants of fluid flow energy and helicity for SAV solutions. It is well known these quantities are important for the  physical fidelity of the model, but most models do not conserve them. In most common Galerkin finite element discretization for incompressible flow problems energy conservation is lost. To preserve the conservation of energy, the skew-symmetric or rotational formulations of the nonlinear term is used, \cite{TAM95}. A finite element formulation of \cite{OR} and \cite{L07} preserves energy and helicity with a slightly altered definition of discrete helicity together with the skew-symmetric formulation of nonlinear term. In this report, by using underlying ideas of  \cite{OR, L07}, we show that without viscous or external forces, the energy and helicity will remain constant in time and they will be correctly balanced  when these forces are present.

Section 5 gives a complete numerical analysis of SAV method along with the proofs of unconditional stability and convergence. In our scheme, overall accuracy and mass conservation in the discrete solution depend on the carefully chosen stabilization parameters, namely the artificial viscosity and the grad-div stabilization. Standard error analysis for SAV method predicts that the optimal choice for the artificial viscosity parameter should be $O(h)$ and the grad-div stabilization parameter should be $O(1)$.


Section 6 presents several numerical examples in order to present evidence of optimal accuracy for an analytical test problem, and also demonstrate the ability of SAV method to give good results on flow around a cylinder and flow between two offset circles.

\section{BDF2LE Based SAV Method}

In this study, we consider  standard notations for the function spaces, e.g., see Adams \cite{A75}. We assume $\Omega \subset \mathbb{R}^{d} $, $d$ $\in$ $\{2,3\}$, be an open regular domain with boundary  $\partial\Omega$. We denote the $L^2(\Omega)$ norm and inner product by $\norm{\cdot}$ and $(\cdot, \cdot)$, respectively. 

For the weak formulation of (\ref{nse}), we set $$\bfX:=(H_0^1(\Omega))^{d}:=\{\bv\in H^1(\Omega): \bv=0 \ \text{on} \ \partial\Omega \}$$ for the space of vector field, whereas for the pressure we set   
$$Q:=L_0^2(\Omega):=\{q\in L^2(\Omega): \int_{\Omega} q d\bx=0 \}.$$ 
We consider the weak formulation: Find $\bu: (0,T] \longrightarrow \bfX$, $p: (0,T] \longrightarrow Q$ such that
\begin{eqnarray}
(\bu_t,\bv)+\nu(\nabla\bu,\nabla\bv)+b(\bu,\bu,\bv)-(p,\nabla\cdot\bv)&=&(\bff,\bv)\quad \forall \bv \in \bfX, \label{vf}\\
(q,\nabla\cdot\bu)&=&0 \quad \forall q \in Q,\label{vf1} 
\end{eqnarray}
with $\bu (\bf x,0)=\bu_0(\bf x) $ $\in \bfX$. Here, we use the skew-symmetric form of the nonlinear term is
\begin{eqnarray}
b(\bu,\bv,\bw) &=&
\frac{1}{2}\big((\bu\cdot\nabla\bv,\bw)-(\bu\cdot\nabla\bw,\bv)\big) \label{ss1}
\end{eqnarray}
and recall from \cite{Layton} that
\begin{eqnarray}
b(\bu,\bv,\bv)&=&0,\label{d1} \\
 b(\bu,\bv,\bw)&=&-b(\bu,\bw,\bv).\label{d2}
 \end{eqnarray}

For finite element approximation of \eqref{vf}-\eqref{vf1}, let $\bfX_h \subset \bfX$ and $Q_h\subset Q$ be two conforming finite element spaces defined on a fine mesh $\pi_h$ for which the usual discrete inf-sup condition is satisfied, e.g., there is a constant $\beta$ independent of the mesh size $h$ such that
\begin{eqnarray}
\inf_{q_h\in{Q}_h}\sup_{\bv_h\in {\bfX}_h}\frac{(q_h,\,\nabla\cdot
	\bv_h)}{||\,\nabla \bv_h\,||\,||\,q_h\,||}\geq \beta > 0.
\label{infsup}
\end{eqnarray}

To formulate SAV discretization method, we need some further notations. Let  $\pi^H$ be a family of  triangulations of $\Omega$ and let $\pi^h$ be a refinement of  $\pi^H$.  We also introduce a coarse or large scale space $ L_{H}\subset L^{2}(\Omega)^{d}$ on a regular mesh $\pi^H$. Note that the choice of the large scale space is critical for SAV formulation \cite{VS, VKL}. This choice can be done in two different ways. The first choice is to introduce $L_H$ on a coarser grid than $(X_h,Q_h)$. The second choice, which we will use, is to define $ L_H$ on the same grid as $(X_h,Q_h)$ using lower degree polynomials. In our case, we consider piecewise polynomials with degree $k$ for velocity space thus for $L_H$ on the coarse mesh, we use piecewise polynomials with degree $k-1$. For the proposed method, the size of coarse mesh $H$ is also selected such that it does not spoil the optimal asymptotic convergence rate. For SAV based BDF2LE, for $k=2$, the coarse mesh size is chosen as $H={O}({h}^{1/2})$.

We now approximate the formulation of (\ref{vf})-(\ref{vf1}) of the NSE by a second-order accurate SAV algorithm based on BDF2LE by the following algorithm. In this method, for the linear term, implicit time discretization and for the nonlinear term two step linear extrapolation have been used.  Let a positive number 
$N$  be given and define the step size $\Delta t=T/n$, $n=0,1,,...,N$ with given at time  $t^n=n\Delta t$  as follows.
\begin{Algorithm}\label{Alg1}
Let $S_H$ be the new coarse mesh variable and the initial condition $\bu_{0}$ be given. Define $\bu^{0}_h, \bu_h^{-1}$ as the nodal interpolants of $\bu^0$. Then, given $\bu_h^n$, $\bu_h^{n-1}$, $p_h^n$,$p_h^{n-1}$, find  
	$(\bu_{h}^{n+1},{p}_{h}^{n+1}, S_H^{n+1}) \in (\bfX_{h},Q_{h},  L_H)$ satisfying  $\forall (\bv_{h},q_{h},l_{H}) \in (\bfX_{h},Q_{h}, L_{H})$  
	 \begin{eqnarray}
	\bigg(\dfrac{3\bu_{h}^{n+1}-4\bu_{h}^{n}
		+\bu_{h}^{n-1}}{2\Delta t},\bv_{h}\bigg)+\nu(\nabla \bu_{h}^{n+1},\nabla \bv_{h})+b(2\bu_{h}^{n}-\bu_h^{n-1},\bu_{h}^{n+1},\bv_{h})-(p_{h}^{n+1},\nabla \cdot \bv_{h})\nonumber\\
	+\alpha_{1}(\nabla \times \bu_{h}^{n+1},\nabla \times \bv_{h})
	-\alpha_{1}(S_{H}^{n+1},\nabla \times \bv_{h})+
	\alpha_{2}(\nabla \cdot \bu_{h}^{n+1},\nabla \cdot \bv_{h})=(\bff(t^{n+1}),\bv_{h}), \label{1}
	\\
	(\nabla \cdot \bu_{h}^{n+1},q_{h})=0,\label{2}
	\\ 
	(S_{H}^{n+1}-\nabla \times \bu_{h}^{n},l_{H})=0. \label{3}
	\end{eqnarray}
\end{Algorithm}
Herein, $\alpha_{1}=\alpha_{1}(\bx,h)$  is the artificial (subgrid) viscosity parameter and $\alpha_2$ is the grad-div stabilization parameter.
\begin{remark}
The artificial viscosity parameter $\alpha_{1}$ is chosen to be constant in element-wise. 
\end{remark}
\begin{remark}
As it is mentioned in \cite{G}, Algorithm \ref{Alg1} requires a coarse grid storage of vorticity with three variables, instead of the full velocity gradient with nine variables of projection-based VMS, see \cite{L}. In addition, the method adds and subtracts the stabilization for consistency but the subtracted term is treated as extra variable in a mixed method. BDF2LE based SAV method is augmented with the grad-div stabilization term adding such term improves conservation of mass in finite element approximation, \cite{FH88}.
\end{remark}
\section{Notations and Mathematical Preliminaries}

We present some notations and  mathematical preliminaries used throughout the paper.  Define the divergence free subspace of $\bfX$ by $\bfV$:
$$\bfV := \{\bv \in \bfX: (\nabla\cdot \bv, q)=0, \quad \forall  q \in  Q\}.$$
The dual space of $\bfX$ is denoted by $H^{-1}$ with norm 
\[\norm{ \bff }_{-1}  = \sup_{0\neq\bv\in{\bfX}}
\frac{|(\bff,\bv)|}{\norm{\nabla \bv }}.
\]
Throughout the paper, we will frequently use Poincar\'e-Friedrichs inequality: there exists a constant $C_{PF}=C_{PF}(\Omega)$ which depends the size of the domain only, such that 
\begin{eqnarray}
\|\bu\| \leq C_{PF}\|\nabla \bu\|,  \quad \forall \bu \in \bfX \nonumber
\end{eqnarray}
We also assume that an inverse inequality is satisfied for a constant that doesn't depend on $h$ such that
\begin{eqnarray}
\norm{\nabla\bv}\leq Ch^{-1}\norm{\bv}, \quad \forall \bv \in \bfX.
\end{eqnarray}
We state the following lemma to bound the trilinear form \eqref{ss1} arising in the analysis.
\begin{lemma}	For $\bu,\bv,\bw$ $\in \bfX$ 
	\begin{eqnarray}
		b(\bu,\bv,\bw) &\leq& C(\Omega)\|\nabla \bu\|\|\nabla \bv\|\|\nabla \bw\|. \label{n3}
		\end{eqnarray}
	In addition, if $\bv, \nabla \bv \in L^{\infty }(\Omega)$,
	\begin{eqnarray}
	b(\bu,\bv,\bw) &\leq&  \dfrac{1}{2}\bigg(\norm{\bu}\|\nabla \bv\|_{\infty}\|\nabla \bw\|+\norm{\bu}\norm{\bv}_{\infty}\|\|\nabla \bw\|\bigg).\label{n2}
	\end{eqnarray}
\end{lemma}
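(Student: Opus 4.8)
The plan is to prove the two bounds \eqref{n3} and \eqref{n2} directly from the skew-symmetric definition \eqref{ss1}, namely $b(\bu,\bv,\bw)=\tfrac12\big((\bu\cdot\nabla\bv,\bw)-(\bu\cdot\nabla\bw,\bv)\big)$, by estimating each of the two terms with H\"older's inequality followed by the relevant Sobolev embeddings and the Poincar\'e--Friedrichs inequality stated just above. For \eqref{n3}, I would bound $(\bu\cdot\nabla\bv,\bw)\le \norm{\bu}_{L^4}\norm{\nabla\bv}\norm{\bw}_{L^4}$ using H\"older with exponents $4,2,4$; then invoke the Sobolev embedding $H^1(\Omega)\hookrightarrow L^4(\Omega)$ (valid for $d\in\{2,3\}$ on a regular bounded domain) to get $\norm{\bu}_{L^4}\le C\norm{\nabla\bu}$ and $\norm{\bw}_{L^4}\le C\norm{\nabla\bw}$, where the last step also uses Poincar\'e--Friedrichs to replace the full $H^1$ norm by $\norm{\nabla\cdot}$. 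The second term $(\bu\cdot\nabla\bw,\bv)$ is handled identically with the roles of $\bv$ and $\bw$ swapped, and averaging gives \eqref{n3} with a constant $C(\Omega)$ absorbing the embedding and Poincar\'e constants.

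For \eqref{n2}, the idea is the same but now I exploit the assumed $L^\infty$ regularity of $\bv$ to avoid putting any derivative or $L^4$ norm on $\bw$ beyond $\norm{\nabla\bw}$. For the first term, H\"older gives $(\bu\cdot\nabla\bv,\bw)\le\norm{\bu}\,\norm{\nabla\bv}_\infty\,\norm{\bw}$, and then Poincar\'e--Friedrichs on $\bw$ yields $\norm{\bu}\,\norm{\nabla\bv}_\infty\,\norm{\nabla\bw}$ up to $C_{PF}$; for the second term I integrate by parts (or simply bound directly), writing $(\bu\cdot\nabla\bw,\bv)=-(\bu\cdot\nabla\bv,\bw)-((\nabla\cdot\bu)\bv,\bw)$ is one option, but more cleanly I bound $(\bu\cdot\nabla\bw,\bv)\le\norm{\bu}\,\norm{\bv}_\infty\,\norm{\nabla\bw}$ directly by H\"older with exponents $2,\infty,2$. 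Combining the two with the factor $\tfrac12$ reproduces \eqref{n2} exactly as stated (note the constant $C_{PF}$ is absorbed or the inequality is read with the understanding that $\norm{\bw}\le C_{PF}\norm{\nabla\bw}$; the stated form already has $\norm{\nabla\bw}$ on the right).

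The only real subtlety — and the step I would flag as the main obstacle — is the Sobolev embedding constant in \eqref{n3}: one must be careful that $H^1_0(\Omega)\hookrightarrow L^4(\Omega)$ holds uniformly, which is fine for $d=2,3$ on the regular domain assumed in Section~2 but would fail for $d\ge 5$; since the paper restricts to $d\in\{2,3\}$ this is not an issue, but it is the place where the hypothesis on $\Omega$ and $d$ is actually used. Everything else is a routine application of H\"older's inequality and the already-stated Poincar\'e--Friedrichs bound, so I would present the argument compactly: state H\"older for each term, cite the embedding, collect constants into $C(\Omega)$, and for \eqref{n2} simply note that the $L^\infty$ hypothesis lets one peel off $\norm{\nabla\bv}_\infty$ and $\norm{\bv}_\infty$ without any embedding at all.
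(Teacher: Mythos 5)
Your argument is correct and follows essentially the same route as the paper's (very terse) proof, which simply invokes H\"older, Ladyzhenskaya/Sobolev embedding, and Poincar\'e--Friedrichs; using $H^1_0(\Omega)\hookrightarrow L^4(\Omega)$ for $d\in\{2,3\}$ in place of Ladyzhenskaya's inequality is an interchangeable choice here. You are also right to flag that the first term of \eqref{n2} strictly picks up a factor $C_{PF}$ from bounding $\norm{\bw}$ by $\norm{\nabla\bw}$, a constant the paper's stated inequality silently omits.
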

\begin{proof}
For a proof of (\ref{n3}) and (\ref{n2}), use H\"{o}lder's and Ladyzhenskaya inequalities and the Sobolev embedding theorem, see \cite{Layton}. The second inequality follows from H\"{o}lder's and Poincar\'e-Friedrichs' inequalities. 
\end{proof}

In addition, we define the discretely divergence free function space by
\begin{eqnarray}
\bfV_{h}&:=&\{ \bv_{h}\in \bfX_h \ | (\nabla\cdot \bv_{h},q_{h})=0, \ \forall q_{h} \  \in Q_{h}\}.\nonumber
\end{eqnarray}
Under the inf-sup condition (\ref{infsup}), the variational formulation of NSE (\ref{nse}) in $(\bfX_h,Q_h)$ is equivalent to in $(\bfV_h,Q_h)$, see, e.g., \cite{GR79}. We also assume that $(\bfX_h,Q_h)$ satisfy the well-known approximation properties for the choice of typical piecewise polynomials of degree $(k,k-1)$, (see, e.g., \cite{G})
\begin{eqnarray}
\inf_{\bv_h\in {\bfX}_h} \left( \|(\bu-\bv_h)\|+h\|\nabla(\bu-\bv_h)\| \right)&\leq&
C h^{k+1} | \bu |_{k+1}\quad{\forall\bu} \in H^{k+1}(\Omega),\label{ap1}\\
\inf_{q_h\in {Q}_h} \|p-q_h\|&\leq & Ch^k |p|_{k}\quad {\forall p} \in H^{k}(\Omega).\label{ap10} 
\end{eqnarray}
\begin{definition} The $L^2$ projection  $P_{ L_H} : (L^2(\Omega))^{d \times d}\longrightarrow  L_H$ is also defined by 
\begin{eqnarray}
(P_{ L_H}\phi-\phi,l_H)=0 \quad \quad \forall l_H \in  L_H.\label{cm}
\end{eqnarray}
\end{definition}
Then, approximation on coarse mesh space $ L_H$ is given by
\begin{eqnarray}
\norm{\phi-P_{ L_H}\phi}\leq CH^k|\phi|_{k+1},\quad \phi \in L^2(\Omega)\cap (H^{k+1}(\Omega))^d.\label{pa}
\end{eqnarray}
For convergence analysis, we use also the discrete Gronwall's lemma given in \cite{GR79}:
\begin{lemma}(Discrete Gronwall's Lemma)\label{gl}
	Assume that $\Delta t$, H and $a_{n},b_{n},c_{n},d_{n}$ (for integers n$\geq$ 0) be non-negative numbers such that if
	\begin{equation}
	a_{N}+\Delta t\sum_{n=0}^{N} b_{n} \leq \Delta t\sum_{n=0}^{N}d_{n}a_{n}+\Delta t\sum_{n=0}^{N}c_{n}
+H \quad \forall N\geq 1
	\end{equation}
then for $\Delta t>0$,
	\begin{equation}
	a_{N}+\Delta t\sum_{n=0}^{N} b_{n} \leq \exp \Big(\Delta t\sum_{n=0}^{N}\dfrac{d_{n}}{1-\Delta t d_{n}}\Big)\Big(\Delta t\sum_{n=0}^{N}c_{n}
	+H\Big) \quad \text{for} \quad N\geq 0.
	\end{equation}
	
\end{lemma}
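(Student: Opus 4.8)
The plan is to prove the estimate by induction on $N$, after first decoupling the highest-index term of the right-hand side. Write $\varepsilon_N:=\Delta t\sum_{n=0}^{N}c_n+H$ and $\mu_n:=(1-\Delta t\,d_n)^{-1}$; here I use the hypothesis $\Delta t\,d_n<1$, which is implicit in the statement (otherwise the exponent is not even defined) and makes each $\mu_n\ge 1$. Since the $c_n$ and $H$ are non-negative, $\varepsilon_N$ is non-negative and non-decreasing in $N$, and since the $b_n$ are non-negative they may be discarded from the right-hand side whenever convenient. The one algebraic identity I would lean on repeatedly is
\[
1+\frac{\Delta t\,d_n}{1-\Delta t\,d_n}=\frac{1}{1-\Delta t\,d_n}=\mu_n .
\]

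First I would split $\Delta t\sum_{n=0}^{N}d_n a_n=\Delta t\sum_{n=0}^{N-1}d_n a_n+\Delta t\,d_N a_N$ in the hypothesis and move $\Delta t\,d_N a_N$ to the left, obtaining $(1-\Delta t\,d_N)a_N+\Delta t\sum_{n=0}^{N}b_n\le F_N$, where $F_N:=\Delta t\sum_{n=0}^{N-1}d_n a_n+\varepsilon_N$ (so $F_0=\varepsilon_0$). Dropping the $b_n$-sum and multiplying by $\mu_N$ gives $a_N\le\mu_N F_N$. Substituting this into $F_{N+1}=(F_N-\varepsilon_N)+\Delta t\,d_N a_N+\varepsilon_{N+1}$ and using the displayed identity turns it into the linear recursion $F_{N+1}\le\mu_N F_N+(\varepsilon_{N+1}-\varepsilon_N)$, $F_0=\varepsilon_0$. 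Unrolling this, then bounding each accumulated weight $\prod_{m=k}^{N-1}\mu_m$ by $\prod_{m=0}^{N-1}\mu_m$ (valid since $\mu_m\ge1$) and using that the increments $\varepsilon_k-\varepsilon_{k-1}\ge 0$ telescope, I would arrive at $F_N\le\varepsilon_N\prod_{m=0}^{N-1}\mu_m$.

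It then remains to put the dissipation term back. Returning to the hypothesis in the form $a_N+\Delta t\sum_{n=0}^{N}b_n\le F_N+\Delta t\,d_N a_N$ and using $a_N\le\mu_N F_N$ once more, the right-hand side is at most $F_N(1+\Delta t\,d_N\mu_N)=\mu_N F_N\le\varepsilon_N\prod_{m=0}^{N}\mu_m$, again by the displayed identity and the bound just obtained. Finally the elementary inequality $1+x\le e^x$ for $x\ge0$ gives $\prod_{m=0}^{N}\mu_m=\prod_{m=0}^{N}\bigl(1+\tfrac{\Delta t\,d_m}{1-\Delta t\,d_m}\bigr)\le\exp\!\bigl(\Delta t\sum_{m=0}^{N}\tfrac{d_m}{1-\Delta t\,d_m}\bigr)$, and recalling $\varepsilon_N=\Delta t\sum_{n=0}^{N}c_n+H$ yields exactly the claimed bound; the lowest-index case is handled directly by the same splitting. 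The only part that takes any real care is the bookkeeping in the unrolled recursion — keeping the products of the $\mu_m$ straight and exploiting that $\varepsilon_N$ enters only through a monotone partial sum so that the increments collapse — and once one notices the identity $1+\Delta t\,d_n\mu_n=\mu_n$, which makes both the reinsertion of the $b_n$-sum and the closing of the recursion lossless, everything else is routine.
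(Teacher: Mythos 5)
Your argument is correct, and it is essentially the standard proof of this lemma (the implicit-form discrete Gronwall inequality of Heywood and Rannacher); the paper itself offers no proof, only a citation, so there is nothing to contrast it with. The key steps — absorbing $\Delta t\,d_N a_N$ into the left side using $\Delta t\,d_N<1$, deriving the linear recursion $F_{N+1}\le \mu_N F_N+(\varepsilon_{N+1}-\varepsilon_N)$, and closing with $1+x\le e^x$ — are all sound, and you are right that $\Delta t\,d_n<1$ must be read as an implicit hypothesis (it is explicit in the original reference but dropped in the paper's statement). The only loose end is the base of your recursion: bounding $F_1$ requires $a_0\le\mu_0\varepsilon_0$, i.e.\ the hypothesis at index $N=0$, which the statement as printed (``$\forall N\ge 1$'') does not literally supply; this is a defect of the paper's transcription of the lemma (the original assumes the inequality for all $n\ge 0$) rather than of your proof, but it is worth stating explicitly that you are using the hypothesis at the lowest index.
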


\section{Conservation Laws for SAV Solution}

This section studies the discrete conservation laws of Algorithm \ref{Alg1}. We present both energy and helicity balance of the algorithm. In general, helicity is not generally preserved for usual boundary conditions, see \cite{2}. An alternative discrete helicity definition, proposed in \cite{OR}, uses the solution of the discrete vorticity equation instead of being the curl of the velocity. In this way, helicity balance is recovered up to the boundary effect. Following underlying ideas of \cite{OR}, we choose $\widetilde\bfX_h$ to be the vorticity space which is the same as velocity discrete space but without imposing homogeneous Dirichlet boundary condition. Herein, discrete helicity definition for Galerkin discretization of the NSE which is denoted by $H_h(t)$ and computed as
$$H_h(t)=\int_{\Omega}\bu_h(t)\cdot\bw_h(t).$$
Herein $\bw_h$ denotes the solution of a discrete vorticity equation. The discrete vorticity equation is as follows: For given $\bu_h(t)$, for all $t>0$, find $(\bw_h(t),\lambda_h,D_H) \in (\widetilde\bfX_h,Q_h,L_H)$ satisfying $\forall (\chi_h,\tau_h,\rho_H) \in (\bfX_h, Q_h,  L_H )$ 
\begin{eqnarray}
\bigg(\dfrac{3\bw_{h}^{n+1}-4\bw_{h}^{n}+\bw_{h}^{n-1}}{2\Delta t},\bchi_{h}\bigg)+
\nu(\nabla \bw_{h}^{n+1},\nabla \bchi_{h})-b(2\bw_{h}^{n}-\bw_h^{n-1},\bu_{h}^{n+1},\bchi_{h})\nonumber\\+b(2\bu_{h}^{n}-\bu_h^{n-1},\bw_{h}^{n+1},\bchi_{h})-(\lambda_{h}^{n+1},\nabla \cdot \bchi_{h})
+\alpha_{1}(\nabla \times \bw_{h}^{n+1},\nabla \times \bchi_{h})\nonumber\\
-\alpha_{1}(D_{H}^{n+1},\nabla \times \bchi_{h})+
\alpha_{2}(\nabla \cdot \bw_{h}^{n+1},\nabla \cdot \bchi_{h})=(\nabla\times\bff(t^{n+1}),\bchi_{h}), \label{w1}\\
(\nabla \cdot \bw_{h}^{n+1},\tau_{h})=0, \label{w2}
\\ 
(D_{H}^{n+1}-\nabla \times \bw_{h}^{n},\rho_{H})=0, \label{w3}
\\
\bw_h^{n+1}=I_h(\nabla\times\bu_h^{n+1})\quad \text{on} \quad\partial \Omega, \label{w4}
\\
\bw_h^{n+1}=I_h(\nabla\times\bu_h^{0})\quad \text{for} \quad t=0,\label{w5}
\end{eqnarray}
where $I_h: \nabla\times \bfX_h\longrightarrow \widetilde\bfX_h$ is an interpolation operator and $\lambda_h$
is a multiplier which states for the discrete divergence-free condition for vorticity. Note that due to 
\eqref{w2}, $\bw_{h}$ is also in $\bfX_h$.  

We first state the energy balance of SAV method.
\begin{theorem}
	Solutions of Algorithm \ref{Alg1} satisfy the discrete energy balance:
	\begin{eqnarray}
	\lefteqn{\norm{\bu_h^N}^2+\norm{2\bu_h^N-\bu_h^{N-1}}^2}\nonumber\\
	&&+\Delta t\sum_{n=1}^{N-1}\bigg(\nu\norm {\nabla \bu_h^{n+1}}^2+\alpha_1(\nabla\times(\bu_h^{n+1}-\bu_h^n),\nabla\times\bu_h^{n+1})+\alpha_2\norm{\nabla\cdot\bu_h^{n+1}}^2\bigg)\nonumber\\
&&	=\norm{\bu_h^1}^2+\norm{2\bu_h^1-\bu_h^{0}}^2+(\bff(t^{n+1}),\bu_h^{n+1}).
	\end{eqnarray}
\end{theorem}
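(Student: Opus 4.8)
The plan is to run the standard BDF2 energy argument: set $\bv_h=\bu_h^{n+1}$ in the momentum equation \eqref{1} and dispatch each term using the structure of the method. With this test function the trilinear term vanishes by \eqref{d1}, since its last two slots coincide, $b(2\bu_h^{n}-\bu_h^{n-1},\bu_h^{n+1},\bu_h^{n+1})=0$; the pressure term vanishes too, because taking $q_h=p_h^{n+1}$ in \eqref{2} gives $(p_h^{n+1},\nabla\cdot\bu_h^{n+1})=0$. What remains from the linear terms is $\nu\norm{\nabla\bu_h^{n+1}}^2+\alpha_1\norm{\nabla\times\bu_h^{n+1}}^2+\alpha_2\norm{\nabla\cdot\bu_h^{n+1}}^2-\alpha_1(S_H^{n+1},\nabla\times\bu_h^{n+1})$ on the left, balanced by $(\bff(t^{n+1}),\bu_h^{n+1})$ on the right, plus the inertial term that still has to be rewritten.

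For the coarse-scale coupling I would use \eqref{3}: since $\nabla\times\bu_h^{n+1}$ is an admissible test function for the coarse space $L_H$ (it is a piecewise polynomial of the degree defining $L_H$ on the underlying mesh chosen in Section~2), picking $l_H=\nabla\times\bu_h^{n+1}$ in \eqref{3} yields $(S_H^{n+1},\nabla\times\bu_h^{n+1})=(\nabla\times\bu_h^{n},\nabla\times\bu_h^{n+1})$. Hence the two curl contributions combine into the backward-difference form $\alpha_1\bigl(\nabla\times(\bu_h^{n+1}-\bu_h^{n}),\nabla\times\bu_h^{n+1}\bigr)$ that appears in the statement. For the inertial term I would invoke the algebraic identity for the BDF2 difference operator,
\begin{equation*}
\Bigl(\tfrac{3a-4b+c}{2},\,a\Bigr)=\tfrac14\Bigl(\norm{a}^2+\norm{2a-b}^2-\norm{b}^2-\norm{2b-c}^2+\norm{a-2b+c}^2\Bigr),
\end{equation*}
applied with $a=\bu_h^{n+1}$, $b=\bu_h^{n}$, $c=\bu_h^{n-1}$, which turns the first term of \eqref{1} into a telescoping difference of the quantity $\norm{\bu_h^{n+1}}^2+\norm{2\bu_h^{n+1}-\bu_h^{n}}^2$ together with the non-negative numerical-dissipation term $\norm{\bu_h^{n+1}-2\bu_h^{n}+\bu_h^{n-1}}^2$.

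Finally I would multiply the resulting per-step identity by $\Delta t$ (with the precise constant fixed by the identity above), sum over $n=1,\dots,N-1$, and telescope: the two inertial norm pairs collapse to $\norm{\bu_h^N}^2+\norm{2\bu_h^N-\bu_h^{N-1}}^2$ minus $\norm{\bu_h^1}^2+\norm{2\bu_h^1-\bu_h^0}^2$, while the dissipation contributions and the forcing $\Delta t\sum_{n=1}^{N-1}(\bff(t^{n+1}),\bu_h^{n+1})$ accumulate, giving the claimed balance. The work here is bookkeeping rather than conceptual: the points needing care are routing the coarse-scale term through \eqref{3} so that exactly the backward-difference curl term is recovered, and keeping the polarization identity consistent so the endpoint terms telescope cleanly. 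In particular, the term $\Delta t\sum_{n=1}^{N-1}\norm{\bu_h^{n+1}-2\bu_h^{n}+\bu_h^{n-1}}^2$ is an intrinsic part of the BDF2 energy identity and should be tracked alongside (or absorbed into) the stated balance.
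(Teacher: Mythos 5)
Your proposal is correct and follows essentially the same route as the paper: test with $\bv_h=\bu_h^{n+1}$ and $q_h=p_h^{n+1}$, apply the BDF2 polarization identity, and telescope after summing over $n$. You are in fact slightly more explicit than the paper in routing the coarse-scale term through \eqref{3} with $l_H=\nabla\times\bu_h^{n+1}$ and in tracking the numerical-dissipation term $\norm{\bu_h^{n+1}-2\bu_h^{n}+\bu_h^{n-1}}^2$, both of which the paper's argument uses implicitly.
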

\begin{proof}
	Set $\bv_h=\bu_h^{n+1}$ in (\ref{1}) and $q_h=p_h^{n+1}$ in (\ref{2}), and use the identity  $$a(3a-4b+c)=\dfrac{1}{2}((a^2-b^2)+(2a-b)^2-(2b-c)^2+(a-2b+c)^2).$$
	This yields
	\begin{eqnarray}
	\dfrac{1}{4\Delta t}\norm{\bu_h^{n+1}}^2-\dfrac{1}{4\Delta t}\norm{\bu_h^n}^2+\dfrac{1}{2\Delta t}\norm{2\bu_h^{n+1}-\bu_h^n}^2-\dfrac{1}{2\Delta t}\norm{2\bu_h^n-\bu_h^{n-1}}^2\nonumber\\
	+\dfrac{1}{2\Delta t}\norm{\bu_h^{n+1}-2\bu_h^n+\bu_h^{n-1}}^2
		+\alpha_1(\nabla\times(\bu_h^{n+1}-\bu_h^n),\nabla\times\bu_h^{n+1})\nonumber\\+\nu\norm{\nabla\bu_h^{n+1}}^2+\alpha_2\norm{\nabla\cdot \bu_h^{n+1}}^2=(\bff(t^{n+1}),\bu_h^{n+1}).
	\end{eqnarray}
	Now, summing from $n = 1$ to $N-1$ and multiplying each term by $4\Delta t$ proves the stated result.
\end{proof}
We give the helicity balance of the algorithm by using $(\ref{w1})-(\ref{w5})$.
\begin{theorem}
	Solutions of Algorithm \ref{Alg1} satisfy the following discrete helicity balance:
	\begin{eqnarray}
	(\bw_h^N,\bu_h^N)+(2\bw_h^N-\bw_h^{N-1},2\bu_h^N-\bu_h^{N-1})+\sum_{n=1}^{N-1}(\bw_h^{n+1}-2\bw_h^n+\bw_h^{n-1},\bu_h^{n+1}-2\bu_h^n+\bu_h^{n-1})\nonumber\\+\Delta t\sum_{n=1}^{N-1}\bigg(\nu(\nabla\bw_h^{n+1},\nabla\bu_h^{n+1})+\alpha_1(\nabla\times\bw_h^{n+1},\nabla\times(\bu_h^{n+1}-\bu_h^n))+\alpha_2(\nabla\cdot\bw_h^{n+1},\nabla\bu_h^{n+1})\bigg)\nonumber\\=(\bw_h^1,\bu_h^1)+(2\bw_h^1-\bw_h^{0},2\bu_h^1-\bu_h^{0})+2\Delta t\sum_{n=1}^{N-1}\bigg((\bff(t^{n+1}),\bw_h^{n+1})+(\nabla\times\bff(t^{n+1}),\bu_h^{n+1})\bigg).
	\end{eqnarray}
\end{theorem}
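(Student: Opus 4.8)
The plan is to pair the momentum system \eqref{1}--\eqref{3} with the discrete vorticity system \eqref{w1}--\eqref{w5}, in the same spirit as the energy-balance proof above but now cross-testing each system with the other variable. Fix $n$ with $1\le n\le N-1$. First I would test \eqref{1} with $\bv_h=\bw_h^{n+1}$ (admissible since $\bw_h^{n+1}\in\bfX_h$ by \eqref{w2}) and \eqref{2} with $q_h=\lambda_h^{n+1}$; then test \eqref{w1} with $\bchi_h=\bu_h^{n+1}\in\bfX_h$ and \eqref{w2} with $\tau_h=p_h^{n+1}$. Adding the two momentum-type identities, the pressure/multiplier contributions $(p_h^{n+1},\nabla\cdot\bw_h^{n+1})$ and $(\lambda_h^{n+1},\nabla\cdot\bu_h^{n+1})$ drop out by \eqref{w2} and \eqref{2}, respectively.

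Next I would dispose of the convective and anti-diffusion terms. In \eqref{w1} tested with $\bu_h^{n+1}$ the term $-b(2\bw_h^n-\bw_h^{n-1},\bu_h^{n+1},\bu_h^{n+1})$ is zero by \eqref{d1}, while $b(2\bu_h^n-\bu_h^{n-1},\bw_h^{n+1},\bu_h^{n+1})=-b(2\bu_h^n-\bu_h^{n-1},\bu_h^{n+1},\bw_h^{n+1})$ by \eqref{d2}, which cancels exactly the trilinear term coming from \eqref{1} tested with $\bw_h^{n+1}$; hence all nonlinear terms disappear. For the SAV terms I would use \eqref{3} and \eqref{w3}, which identify $S_H^{n+1}=P_{ L_H}(\nabla\times\bu_h^n)$ and $D_H^{n+1}=P_{ L_H}(\nabla\times\bw_h^n)$; since the curl of a degree-$k$ discrete field lies in the degree-$(k-1)$ space $ L_H$, the projection is transparent against $\nabla\times\bw_h^{n+1}$ and $\nabla\times\bu_h^{n+1}$, so $\alpha_1(S_H^{n+1},\nabla\times\bw_h^{n+1})=\alpha_1(\nabla\times\bu_h^n,\nabla\times\bw_h^{n+1})$ and likewise for the $D_H$ pairing. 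Combining these with the two $\alpha_1(\nabla\times\bu_h^{n+1},\nabla\times\bw_h^{n+1})$ contributions produces the stated $\alpha_1(\nabla\times\bw_h^{n+1},\nabla\times(\bu_h^{n+1}-\bu_h^n))$-type term, while the viscous and grad-div terms simply add up.

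The remaining ingredient is the bilinear BDF2 polarization identity, the analogue of the scalar identity $a(3a-4b+c)=\tfrac12\big((a^2-b^2)+(2a-b)^2-(2b-c)^2+(a-2b+c)^2\big)$ used in the energy proof, namely
\begin{multline}
(3a_{n+1}-4a_n+a_{n-1})b_{n+1}+a_{n+1}(3b_{n+1}-4b_n+b_{n-1}) = \big(a_{n+1}b_{n+1}-a_nb_n\big)\\
+\big((2a_{n+1}-a_n)(2b_{n+1}-b_n)-(2a_n-a_{n-1})(2b_n-b_{n-1})\big)+(a_{n+1}-2a_n+a_{n-1})(b_{n+1}-2b_n+b_{n-1}),
\end{multline}
applied with $(a,b)=(\bu_h,\bw_h)$ inside $(\cdot,\cdot)$. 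After multiplying the summed equation by the appropriate power of $\Delta t$ (consistent with the normalization in the statement), summing over $n=1,\dots,N-1$ and telescoping the first two brackets, the left-hand side of the theorem emerges with the $n=1$ boundary terms moved to the right, and the data terms $(\bff(t^{n+1}),\bw_h^{n+1})$ and $(\nabla\times\bff(t^{n+1}),\bu_h^{n+1})$ assembling into the right-hand sum.

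I expect the main obstacle to be the bookkeeping around the anti-diffusion/projection terms together with the nonhomogeneous boundary data \eqref{w4}: one must confirm both that $\nabla\times\bu_h^{n+1},\nabla\times\bw_h^{n+1}\in L_H$ (so that $P_{ L_H}$ may be dropped) and that the nonzero boundary values of $\bw_h^{n+1}$ do not spoil the cancellations in the trilinear form or in the curl pairings --- this is precisely the ``helicity balance up to boundary effects'' phenomenon noted in Section 4. Once these points are settled, verifying the purely algebraic bilinear identity and carrying out the telescoping sum is routine.
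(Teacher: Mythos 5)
Your proposal follows essentially the same route as the paper's proof: cross-testing the momentum system with $\bw_h^{n+1}$ and the vorticity system with $\bu_h^{n+1}$, cancelling the pressure/multiplier and convective terms via \eqref{2}, \eqref{w2}, \eqref{d1} and \eqref{d2}, eliminating $S_H^{n+1}$ and $D_H^{n+1}$ through \eqref{3} and \eqref{w3}, and telescoping with the bilinear BDF2 polarization identity. The explicit identity you state is the correct symmetrized one (it reproduces the $(2\bw_h^N-\bw_h^{N-1},2\bu_h^N-\bu_h^{N-1})$ term of the statement), and the caveats you raise about $\nabla\times\bw_h^{n+1}\in L_H$ and the boundary data \eqref{w4} are precisely the points the paper's argument passes over without comment.
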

\begin{proof}
Choose $\bv_h=\bw_h^{n+1}$ in (\ref{1}) and $\bchi_h=\bu_h^{n+1}$	in (\ref{w1}). Then, the pressure term and one of the nonlinear terms in (\ref{w1}) vanish and we get
\begin{eqnarray}
\bigg(\dfrac{3\bu_{h}^{n+1}-4\bu_{h}^{n}+\bu_{h}^{n-1}}{2\Delta t},\bw_{h}^{n+1}\bigg)+\nu(\nabla \bu_{h}^{n+1},\nabla \bw_{h}^{n+1})+b(2\bu_{h}^{n}-\bu_h^{n-1},\bu_{h}^{n+1},\bw_{h}^{n+1})
\nonumber\\
+\alpha_{1}(\nabla \times \bu_{h}^{n+1},\nabla \times \bw^{n+1}_{h})
-\alpha_{1}(S_{H}^{n+1},\nabla \times \bw_{h}^{n+1})\nonumber\\+
\alpha_{2}(\nabla \cdot \bu_{h}^{n+1},\nabla \cdot \bw_{h}^{n+1})=(\bff(t^{n+1}),\bw_{h}^{n+1})\,\label{uh2} 
\end{eqnarray}
and
\begin{eqnarray}
\bigg(\dfrac{3\bw_{h}^{n+1}-4\bw_{h}^{n}+\bw_{h}^{n-1}}{2\Delta t},\bu_{h}\bigg)+
\nu(\nabla \bw_{h}^{n+1},\nabla \bu_{h}^{n+1})+b(2\bu_{h}^{n}-\bu_h^{n-1},\bw_{h}^{n+1},\bu^{n+1}_{h})
\nonumber\\
+\alpha_{1}(\nabla \times \bw_{h}^{n+1},\nabla \times \bu^{n+1}_{h})
-\alpha_{1}(D_{H}^{n+1},\nabla \times \bu^{n+1}_{h})\nonumber\\+
\alpha_{2}(\nabla \cdot \bw_{h}^{n+1},\nabla \cdot \bu^{n+1}_{h})=(\nabla\times\bff(t^{n+1}),\bu^{n+1}_{h}).\label{wh2}
\end{eqnarray}
Now, setting $l_H=\nabla\times\bw_h^{n+1}$ in (\ref{3}) and $\rho_H=\nabla\times\bu_h^{n+1}$ in (\ref{w3}) we get
\begin{eqnarray}
(S_H^{n+1},\nabla\times\bw_h^{n+1})=(\nabla\times\bu_h^{n},\nabla\times\bw_h^{n+1}), \label{uh3}
\end{eqnarray}
and
\begin{eqnarray}
(D_H^{n+1},\nabla\times\bu_h^{n+1})=(\nabla\times\bw_h^{n},\nabla\times\bu_h^{n+1}).\label{wh3}
\end{eqnarray}
Then, substituting (\ref{uh3}) into the equation (\ref{uh2}) and (\ref{wh3}) into the equation (\ref{wh2}) leads to
\begin{eqnarray}
\bigg(\dfrac{3\bu_{h}^{n+1}-4\bu_{h}^{n}+\bu_{h}^{n-1}}{2\Delta t},\bw_{h}^{n+1}\bigg)+\nu(\nabla \bu_{h}^{n+1},\nabla \bw_{h}^{n+1})+b(2\bu_{h}^{n}-\bu_h^{n-1},\bu_{h}^{n+1},\bw_{h}^{n+1})\nonumber\\
+\alpha_{1}(\nabla \times (\bu_{h}^{n+1}-\bu_h^n),\nabla \times \bw_{h}^{n+1})+
\alpha_{2}(\nabla \cdot \bu_{h}^{n+1},\nabla \cdot \bw_{h}^{n+1})=(\bff(t^{n+1}),\bw_{h}^{n+1}),\label{uh4}
\end{eqnarray}
and
\begin{eqnarray}
\bigg(\dfrac{3\bw_{h}^{n+1}-4\bw_{h}^{n}+\bw_{h}^{n-1}}{2\Delta t},\bu_{h}\bigg)+
\nu(\nabla \bw_{h}^{n+1},\nabla \bu_{h}^{n+1})+b(2\bu_{h}^{n}-\bu_h^{n-1},\bw_{h}^{n+1},\bu^{n+1}_{h})\nonumber\\
+\alpha_{1}(\nabla \times (\bw_{h}^{n+1}-\bw_h^n),\nabla \times \bu^{n+1}_{h})
+
\alpha_{2}(\nabla \cdot \bw_{h}^{n+1},\nabla \cdot \bu^{n+1}_{h})=(\nabla\times\bff(t^{n+1}),\bu^{n+1}_{h}).\label{wh4}
\end{eqnarray}
Next, rewriting the first terms on the left hand sides of (4.13) and (4.10) yields
\begin{eqnarray}
\dfrac{1}{4\Delta t}(\bu_h^{n+1},\bw_h^{n+1})-\dfrac{1}{4\Delta t}(\bu_h^{n},\bw_h^{n})+\dfrac{1}{4\Delta t}(2\bu_h^{n+1}-\bu_h^n,\bw_h^{n+1}-\bw_h^n)-\dfrac{1}{4\Delta t}(2\bu_h^{n}-\bu_h^{n-1},\bw_h^{n}-\bw_h^{n-1})\nonumber\\
+\dfrac{1}{4\Delta t}(\bu_h^{n+1}-2\bu_h^{n}+\bu_h^{n-1},\bw_h^{n+1}-2\bw_h^{n}+\bw_h^{n-1})+\nu(\nabla\bu_h^{n+1},\nabla\bw_h^{n+1})
+b(2\bu_h^n-\bu_h^{n-1},\bu_h^{n+1},\bw_h^{n+1})\nonumber\\+\alpha_{1}(\nabla \times (\bu_{h}^{n+1}-\bu_h^n),\nabla \times \bw^{n+1}_{h})+
\alpha_{2}(\nabla \cdot \bu_{h}^{n+1},\nabla \cdot \bw_{h}^{n+1})=(\bff(t^{n+1}),\bw_{h}^{n+1}),\nonumber
\end{eqnarray}
and
\begin{eqnarray}
\dfrac{1}{4\Delta t}(\bw_h^{n+1},\bu_h^{n+1})-\dfrac{1}{4\Delta t}(\bw_h^{n},\bu_h^{n})+\dfrac{1}{4\Delta t}(2\bw_h^{n+1}-\bw_h^n,\bu_h^{n+1}-\bu_h^n)-\dfrac{1}{4\Delta t}(2\bw_h^{n}-\bw_h^{n-1},\bu_h^{n}-\bu_h^{n-1})\nonumber\\
+\dfrac{1}{4\Delta t}(\bw_h^{n+1}-2\bw_h^{n}+\bw_h^{n-1},\bu_h^{n+1}-2\bu_h^{n}+\bu_h^{n-1})+\nu(\nabla\bw_h^{n+1},\nabla\bu_h^{n+1})+b(2\bu_h^n-\bu_h^{n-1},\bw_h^{n+1},\bu_h^{n+1})\nonumber\\
+\alpha_{1}(\nabla \times (\bw_{h}^{n+1}-\bw_h^n),\nabla \times \bu^{n+1}_{h})+
\alpha_{2}(\nabla \cdot \bw_{h}^{n+1},\nabla \cdot \bu_{h}^{n+1})=(\nabla\times\bff(t^{n+1}),\bu_{h}^{n+1}).\nonumber
\end{eqnarray}
Add these two equations and use (\ref{d2}) to obtain
\begin{eqnarray}
\dfrac{1}{2\Delta t}(\bu_h^{n+1},\bw_h^{n+1})-\dfrac{1}{2\Delta t}(\bu_h^{n},\bw_h^{n})+\dfrac{1}{2\Delta t}(2\bu_h^{n+1}-\bu_h^n,\bw_h^{n+1}-\bw_h^n)-\dfrac{1}{2\Delta t}(2\bu_h^{n}-\bu_h^{n-1},\bw_h^{n}-\bw_h^{n-1})\nonumber\\
+\dfrac{1}{2\Delta t}(\bu_h^{n+1}-2\bu_h^{n}+\bu_h^{n-1},\bw_h^{n+1}-2\bw_h^{n}+\bw_h^{n-1})
+2\alpha_{1}(\nabla \times (\bu_{h}^{n+1}-\bu_h^n),\nabla \times \bw^{n+1}_{h})\nonumber\\+2\nu(\nabla\bu_h^{n+1},\nabla\bw_h^{n+1})
+2\alpha_{2}(\nabla \cdot \bu_{h}^{n+1},\nabla \cdot \bw_{h}^{n+1})=(\bff(t^{n+1}),\bw_{h}^{n+1})+(\nabla\times\bff(t^{n+1}),\bu_{h}^{n+1}).\nonumber
\end{eqnarray}
Finally, summing over time-steps and multiplying both sides by $2\Delta t$ gives the required helicity balance.
\end{proof}
\section{Numerical Analysis} \label{Sec}	
This section provides unconditional stability result and convergence analysis of the proposed Algorithm \ref{Alg1}. To do this, for theoretical analysis we present the finite element discretization in  ${\bf V}_h$.\\ 
Then, BDF2LE based SAV method in ${\bf V}_h$ reads as follows: Find $(\bu_{h}^{n+1}, S_{H}^{n+1})\in ({\bf V}_h,L_H)$  satisfying $\forall (\bv_h,l_H)\in ({\bf V}_h,L_H)$.  
	\begin{eqnarray}
	\bigg(\dfrac{3\bu_{h}^{n+1}-4\bu_{h}^{n}
		+\bu_{h}^{n-1}}{2\Delta t},\bv_{h}\bigg)+\nu(\nabla \bu_{h}^{n+1},\nabla \bv_{h})+b(2\bu_{h}^{n}-\bu_h^{n-1},\bu_{h}^{n+1},\bv_{h})\nonumber\\
	+\alpha_{1}(\nabla \times \bu_{h}^{n+1},\nabla \times \bv_{h})
	-\alpha_{1}(S_{H}^{n+1},\nabla \times \bv_{h})\nonumber\\
	+\alpha_{2}(\nabla \cdot \bu_{h}^{n+1},\nabla \cdot \bv_{h})=(\bff(t^{n+1}),\bv_{h}) \label{vh1}
	\\ 
	(S_{H}^{n+1}-\nabla \times \bu_{h}^{n},l_{H})=0 \label{vh3}
	\end{eqnarray}
The following lemma is required for proving the existence of discrete solutions to \eqref{vh1}-\eqref{vh3}, see \cite{Layton}. This motives the more detailed error analysis that follows.	
\begin{lemma}\label{lemma} Let $ \bff$ $\in L^{\infty}(0,T;H^{-1}(\Omega)) $ and $\bu_h$ be a solution of Algorithm \ref{Alg1}. Then, for any $\Delta t>0$ and $N\geq1$ 
			\begin{eqnarray}
		\lVert {\bu_{h}^{N}} \rVert^{2} +\lVert{2\bu_{h}^{N}-\bu_{h}^{N-1}} \rVert ^{2} +2\Delta t \sum_{n=1}^{N-1}\big(\nu \lVert {\nabla \bu_{h}^{n+1}} \rVert^{2} +2\alpha_{2} \lVert {\nabla \cdot \bu_{h}^{n+1}}\rVert^{2}\big)+2\alpha_1\Delta t\norm{\nabla\times \bu_h^N}^2\nonumber
		\\
		\leq \lVert {\bu_{h}^{1}} \rVert ^{2}+\lVert {2\bu_{h}^{1}-\bu_{h}^{0}} \rVert ^{2}+2\alpha_1\Delta t\norm{\nabla\times \bu_h^1}^2 +2\Delta t \sum_{n=1}^{N-1}  \nu^{-1}\lVert {\bff(t^{n+1})} \rVert ^{2}_{-1}. 
		\end{eqnarray}
	\end{lemma}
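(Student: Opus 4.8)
The plan is to run the energy-balance computation preceding this lemma, but to retain the viscous, grad--div and subgrid terms on the left and to absorb the body force, so that no Gronwall argument is needed and the bound holds for every $\Delta t>0$. First I would test \eqref{vh1} with $\bv_h=\bu_h^{n+1}\in\bfV_h$: the skew-symmetric trilinear term vanishes by \eqref{d1}, and since we work in $\bfV_h$ there is no pressure contribution. For the discrete time-derivative term I would apply the identity
\[
\big(3\bu_h^{n+1}-4\bu_h^{n}+\bu_h^{n-1},\bu_h^{n+1}\big)=\tfrac12\Big(\norm{\bu_h^{n+1}}^2-\norm{\bu_h^{n}}^2+\norm{2\bu_h^{n+1}-\bu_h^{n}}^2-\norm{2\bu_h^{n}-\bu_h^{n-1}}^2+\norm{\bu_h^{n+1}-2\bu_h^{n}+\bu_h^{n-1}}^2\Big),
\]
the same scalar identity already used for the energy balance; this produces the two telescoping pairs appearing in the statement together with the non-negative term $\norm{\bu_h^{n+1}-2\bu_h^{n}+\bu_h^{n-1}}^2$, which I would simply discard from below.

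The key step, and the one I expect to be the main obstacle, is showing that the subgrid pair $\alpha_1\norm{\nabla\times\bu_h^{n+1}}^2-\alpha_1\big(S_H^{n+1},\nabla\times\bu_h^{n+1}\big)$ contributes a non-negative, telescoping amount rather than a term needing to be hidden in $\nu\norm{\nabla\bu_h^{n+1}}^2$. Equation \eqref{vh3} says precisely that $S_H^{n+1}=P_{L_H}(\nabla\times\bu_h^{n})$, so by the $L^2$-contractivity $\norm{P_{L_H}\phi}\le\norm{\phi}$ that follows from \eqref{cm}, together with Cauchy--Schwarz and Young's inequality,
\[
\alpha_1\big(S_H^{n+1},\nabla\times\bu_h^{n+1}\big)\le\alpha_1\norm{\nabla\times\bu_h^{n}}\,\norm{\nabla\times\bu_h^{n+1}}\le\tfrac{\alpha_1}{2}\norm{\nabla\times\bu_h^{n}}^2+\tfrac{\alpha_1}{2}\norm{\nabla\times\bu_h^{n+1}}^2.
\]
Hence the subgrid pair is bounded below by $\tfrac{\alpha_1}{2}\big(\norm{\nabla\times\bu_h^{n+1}}^2-\norm{\nabla\times\bu_h^{n}}^2\big)$, which telescopes after summation to the boundary terms $\alpha_1\norm{\nabla\times\bu_h^{N}}^2$ and $\alpha_1\norm{\nabla\times\bu_h^{1}}^2$; the viscous term $\nu\norm{\nabla\bu_h^{n+1}}^2$ and the grad--div term $\alpha_2\norm{\nabla\cdot\bu_h^{n+1}}^2$ stay untouched on the left.

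For the body force I would use $(\bff(t^{n+1}),\bu_h^{n+1})\le\norm{\bff(t^{n+1})}_{-1}\norm{\nabla\bu_h^{n+1}}$ (legitimate since $\bu_h^{n+1}\in\bfV_h\subset\bfX$, so the $H^{-1}$ pairing applies) and split with Young's inequality so that half of the viscous term is consumed, leaving $\tfrac{\nu}{2}\norm{\nabla\bu_h^{n+1}}^2$ on the left and $\tfrac{1}{2\nu}\norm{\bff(t^{n+1})}_{-1}^2$ on the right. Multiplying the resulting per-step inequality by $4\Delta t$, summing from $n=1$ to $N-1$, and collapsing the three telescoping sums (for $\norm{\bu_h^{\,\cdot}}^2$, for $\norm{2\bu_h^{\,\cdot}-\bu_h^{\,\cdot}}^2$, and for $\alpha_1\norm{\nabla\times\bu_h^{\,\cdot}}^2$) yields exactly the claimed estimate, with the factor $4\Delta t$ leaving precisely $2\Delta t\,\nu$, $4\Delta t\,\alpha_2=2\Delta t\,(2\alpha_2)$, $2\alpha_1\Delta t$ and $2\Delta t\,\nu^{-1}$ in the final form. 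Note that the discrete Gronwall Lemma~\ref{gl} is never invoked: the implicit treatment of the linear terms together with the extrapolated (hence here harmless) nonlinearity is what makes the bound unconditional, and the only genuinely delicate point is the constant bookkeeping just described.
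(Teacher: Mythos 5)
Your proposal is correct and follows essentially the same route as the paper: test with $\bv_h=\bu_h^{n+1}$, apply the BDF2 $G$-norm identity, bound $\lVert S_H^{n+1}\rVert\leq\lVert\nabla\times\bu_h^{n}\rVert$ from \eqref{vh3} so the subgrid pair telescopes at rate $\tfrac{\alpha_1}{2}$, absorb the forcing into half the viscous term, and sum after multiplying by $4\Delta t$. The only cosmetic difference is that you obtain the bound on $\lVert S_H^{n+1}\rVert$ via $L^2$-projection contractivity, whereas the paper tests \eqref{vh3} with $l_H=S_H^{n+1}$ and uses Cauchy--Schwarz and Young's inequality; these are equivalent.
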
	
	\begin{proof} To start the proof, we first choose $\bv_{h}=\bu_{h}^{n+1}$ in (\ref{vh1}), vanishing the skew-symmetric trilinear term to obtain 
			\begin{eqnarray}
		\bigg(\dfrac{3\bu_{h}^{n+1}-4\bu_{h}^{n}+\bu_{h}^{n-1}}{2\Delta t},\bu_{h}^{n+1}\bigg)+v(\nabla \bu_{h}^{n+1},\nabla \bu_{h}^{n+1})
		+\alpha_{1}(\nabla \times \bu_{h}^{n+1}\nabla \times \bu_{h}^{n+1})\nonumber\\
		+\alpha_{2}(\nabla \cdot \bu_{h}^{n+1},\nabla \cdot \bu_{h}^{n+1})
		=\alpha_{1}(S_{H}^{n+1},\nabla \times  \bu_{h}^{n+1})+(\bff(t^{n+1}),\bu_{h}^{n+1}).\label{eq1}
		\end{eqnarray}
		Next, for the first term in the left hand side of (\ref{eq1}), we use the identity;
		\begin{eqnarray}
		\dfrac{1}{2}(3a-4b+c)a=\dfrac{1}{4}[a^{2}+(2a-b)^{2}]-\dfrac{1}{4}[b^{2}+(2b-c)^{2}]+\dfrac{1}{4}(a-2b+c)^{2}. \label{id}
		\end{eqnarray}
		Then, one has
			\begin{eqnarray}
		\dfrac{1}{4\Delta t}\bigg[\lVert{\bu_{h}^{n+1}}\rVert^{2}+\lVert{2\bu_{h}^{n+1}-\bu_{h}^{n}}\rVert^{2}\bigg]-\dfrac{1}{4\Delta t}\bigg[\lVert{\bu_{h}^{n}}\rVert^{2}+\lVert{2\bu_{h}^{n}-\bu_{h}^{n-1}}\rVert^{2}\bigg]+\dfrac{1}{4\Delta t}\lVert{\bu_{h}^{n+1}-2\bu_{h}^{n}+\bu_{h}^{n-1}}\rVert^{2}\nonumber
		\\
		+\nu\lVert{\nabla \bu_{h}^{n+1}}\rVert^{2}+\alpha_{1}\lVert{\nabla\times \bu_{h}^{n+1}}\rVert^{2}+\alpha_{2}\lVert{\nabla \cdot \bu_{h}^{n+1}}\rVert^{2}
		=\alpha_{1}(S_{H}^{n+1},\nabla\times \bu_{h}^{n+1})+(\bff(t^{n+1}),\bu_{h}^{n+1}) \label{eq2}
		\end{eqnarray}
		The application of Cauchy-Schwarz inequality, Young's inequality and the dual norm on the right hand side terms of (\ref{eq2}) gives 
		\begin{eqnarray}
		\dfrac{1}{4\Delta t}\bigg[\lVert{\bu_{h}^{n+1}}\rVert^{2}+\lVert{2\bu_{h}^{n+1}-\bu_{h}^{n}}\rVert^{2}\bigg]-\dfrac{1}{4\Delta t}\bigg[\lVert{\bu_{h}^{n}}\rVert^{2}+\lVert{2\bu_{h}^{n}-\bu_{h}^{n-1}}\rVert^{2}\bigg]+\dfrac{1}{4\Delta t}\lVert{\bu_{h}^{n+1}-2\bu_{h}^{n}+\bu_{h}^{n-1}}\rVert^{2}\nonumber
		\\
		+\dfrac{\nu}{2}\lVert{\nabla \bu_{h}^{n+1}}\rVert^{2}+\dfrac{\alpha_{1}}{2}\lVert{\nabla\times \bu_{h}^{n+1}}\rVert^{2}+\alpha_{2}\lVert{\nabla \cdot \bu_{h}^{n+1}}\rVert^{2}
		\leq \dfrac{\alpha_{1}}{2}\norm{S_{H}^{n+1}}^2+\dfrac{1}{2\nu}\|\bff(t^{n+1})\|_{-1}^2 \label{eq3}
		\end{eqnarray}
		Note that choosing $l_{H}=S_{H}^{n+1}$ in (\ref{vh3}) and using the Cauchy-Schwarz and Young's inequalities, we obtain the following bound for the first term on the right hand side of (\ref{eq3}) 
		\begin{eqnarray}
		\lVert{S_{H}^{n+1}}\rVert^{2}=(S_{H}^{n+1},\nabla\times \bu_{h}^n)\leq \lVert{\nabla\times \bu_{h}^n}\rVert \lVert {S_{H}^{n+1}}\rVert\leq \dfrac{1}{2} \lVert {\nabla\times \bu_{h}^n}\rVert^{2}+\dfrac{1}{2}\lVert {S_{H}^{n+1}}\rVert^{2},
		\end{eqnarray}
		so that
			\begin{eqnarray}\lVert{S_{H}^{n+1}}\rVert\leq\lVert{\nabla\times \bu_{h}^n}\rVert \label{5}.
		\end{eqnarray}
Using the above inequality, dropping the non-negative term $\dfrac{1}{4\Delta t}\lVert{\bu_{h}^{n+1}-2\bu_{h}^{n}+\bu_{h}^{n-1}}\rVert^{2}$ and rearranging terms in (\ref{eq3}), we obtain
		\begin{eqnarray}
		\dfrac{1}{4\Delta t}\bigg[\lVert{\bu_{h}^{n+1}}\rVert^{2}-\lVert{\bu_{h}^{n}}\rVert^{2}\bigg]+\dfrac{1}{4\Delta t}\bigg[\lVert{2\bu_{h}^{n+1}-\bu_{h}^{n}}\rVert^{2}-\lVert{2\bu_{h}^{n}-\bu_{h}^{n-1}}\rVert^{2}\bigg]
		+\dfrac{\nu}{2}\lVert{\nabla \bu_{h}^{n+1}}\rVert^{2}\nonumber\\
		+\dfrac{\alpha_{1}}{2}\lVert{\nabla\times \bu_{h}^{n+1}}\rVert^{2}+\alpha_{2}\lVert{\nabla \cdot \bu_{h}^{n+1}}\rVert^{2}
		\leq \dfrac{\alpha_{1}}{2}\lVert{\nabla\times \bu_{h}^n}\rVert^2+\dfrac{\nu^{-1}}{2}\|\bff(t^{n+1})\|_{-1}^2. \label{eq4}
		\end{eqnarray}
	Multiplying both side of the inequality by $4\Delta t$ and taking sum from $n=1$ to $n=N-1$ yields the required stability bound.
	\end{proof}

 We proceed to present an error analysis of our method. The following notations are used for the discrete norms. For $\bv^n \in H^p(\Omega)$, we define
	\begin{equation*}
	\norm{|\bv|}_{\infty,p}:=\max_{0\leq n\leq N}\|\bv^n\|_p ,  \quad \norm{|\bv|}_{m,p}:=\bigg(\Delta t \sum_{n=0}^{N} \|\bv^n\|_p^m  \bigg)^{\frac{1}{m}} \label{dn}.
	\end{equation*}
	To obtain the optimal error estimations, we assume that the following regularity assumptions are satisfied by the analytical solution:
	\begin{equation} \label{ra}
	\begin{aligned}
	&\bu \in L^{\infty}(0,T;H^1(\Omega))\cap H^1(0,T;H^{k+1}(\Omega))\cap H^3(0,T;L^2(\Omega))\cap H^2(0,T;H^1(\Omega)),\\
	&p \in L^2(0,T;H^{k}(\Omega))\quad \text{and} \quad f \in L^2(0,T;L^2(\Omega)).
	\end{aligned}
	\end{equation}
		\begin{theorem}\label{error}
Let $(\bu,p)$ be the solution of the NSE such that the regularity assumptions (\ref{ra}) are satisfied. Then, for any $ N$, the following bound holds for the difference $\be^n=\bu^n-\bu_h^n$:
		\begin{eqnarray}
		\lefteqn{\|\be^{N}\|^2+ \|2\be^{N}-\be^{N-1}\|^2+\sum_{n=1}^{N-1}\|\be^{n+1}-2\be^n+\be^{n-1}\|^2+\nu\Delta t\sum_{n=1}^{N-1}\norm{\nabla\be^{n+1}}^2}\nonumber\\
		&\leq&\exp(C\nu^{-1}T)\bigg[\|\be^1\|^2+\|2\be^1-\be^{0}\|^2+\nu^{-1}h^{2k+2}\norm{|\bu_t|}^2_{2,k+1}+\nu^{-1}h^{2k}\norm{|\bu|}^2_{2,k+1}\norm{|\nabla \bu|}^2_{\infty,0}\nonumber\\
		&&+\nu h^{2k}\norm{|\bu|}^2_{2,k+1}+\alpha_2^{-1}h^{2k+2}\norm{|p|}^2_{2,k+1}+\nu^{-1}\alpha_1^2h^{2k}\norm{|\bu|}^2_{2,k+1}+\nu^{-1}\alpha_1^2H^{2k}\norm{|\bu|}^2_{2,k+1}\nonumber\\
		&&+\nu^{-1}\alpha_1^2\Delta t^2\norm{|\bu_t|}^2_{\infty,0}+\nu^{-1}\Delta t^4\norm{|\bu_{ttt}|}^2_{2,0}+\nu^{-1}\Delta t^{{4}}\norm{|\nabla\bu|}^2_{\infty,0}\norm{|\nabla\bu_{tt}|}^2\bigg]
		\label{ert}
		\end{eqnarray}
	\end{theorem}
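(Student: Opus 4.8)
The plan is to carry out a standard energy-type error analysis for a linearly extrapolated BDF2 finite element scheme, following the well-worn template (e.g., as used in \cite{G} for the Crank--Nicolson SAV method, adapted to BDF2LE). First I would write the true solution $(\bu,p)$ as a solution of the continuous weak formulation at $t^{n+1}$, and then \emph{add and subtract} the SAV stabilization terms so that the continuous equation has the same structure as \eqref{vh1}; this produces a consistency residual that includes the BDF2 truncation error $\tau^{n+1}:=\frac{3\bu^{n+1}-4\bu^n+\bu^{n-1}}{2\Delta t}-\bu_t(t^{n+1})$, the linear-extrapolation error $2\bu^n-\bu^{n-1}-\bu^{n+1}$ in the nonlinear term, and the ``added'' stabilization terms $\alpha_1(\nabla\times\bu^{n+1},\nabla\times\bv_h)$, $\alpha_1(S_H^{n+1}\text{ vs. }\nabla\times\bu^n)$ and $\alpha_2(\nabla\cdot\bu^{n+1},\nabla\cdot\bv_h)$ (the last vanishing since $\nabla\cdot\bu=0$). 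I would then split the error via an interpolation/projection: $\be^{n+1}=\bu^{n+1}-\bu_h^{n+1}=(\bu^{n+1}-\tilde\bu^{n+1})-(\bu_h^{n+1}-\tilde\bu^{n+1})=:\boldsymbol\eta^{n+1}-\bphi_h^{n+1}$, where $\tilde\bu^{n+1}\in\bfV_h$ is (say) the $L^2$- or Stokes-projection of $\bu^{n+1}$, so that the troublesome pressure and divergence contributions of $\boldsymbol\eta$ are controlled by \eqref{ap1}--\eqref{ap10}.

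Next I would subtract the scheme \eqref{vh1} from the modified continuous equation, test with $\bv_h=\bphi_h^{n+1}\in\bfV_h$, and use the algebraic identity \eqref{id} on the discrete time-derivative term to produce the telescoping quantity $\frac{1}{4\Delta t}\big(\|\bphi_h^{n+1}\|^2+\|2\bphi_h^{n+1}-\bphi_h^n\|^2\big)$ plus the nonnegative term $\frac{1}{4\Delta t}\|\bphi_h^{n+1}-2\bphi_h^n+\bphi_h^{n-1}\|^2$, together with the coercive contributions $\nu\|\nabla\bphi_h^{n+1}\|^2$, $\alpha_1\|\nabla\times\bphi_h^{n+1}\|^2$ and $\alpha_2\|\nabla\cdot\bphi_h^{n+1}\|^2$ on the left. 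On the right I would collect: (i) the $\boldsymbol\eta$-terms (its time difference, $\nu(\nabla\boldsymbol\eta,\nabla\bphi_h)$, the curl and div terms, the pressure term $(p^{n+1}-q_h,\nabla\cdot\bphi_h)$); (ii) the nonlinear terms, which after using \eqref{d1}--\eqref{d2} and adding/subtracting intermediate arguments reduce to a sum of trilinear forms bounded by \eqref{n3} or \eqref{n2}, the key ones producing the factor $\norm{|\nabla\bu|}_{\infty,0}$; (iii) the SAV-consistency terms, where the crucial step is to estimate $\alpha_1(\nabla\times\bu^{n+1}-S_H^{n+1},\nabla\times\bphi_h^{n+1})$ by inserting $\nabla\times\bu^n$ and the $L^2$-projection $P_{L_H}(\nabla\times\bu^n)$: one piece gives $\alpha_1\|\nabla\times(\bu^{n+1}-\bu^n)\|\lesssim\alpha_1\Delta t\|\bu_t\|$, another gives the projection error $\alpha_1 H^k|\bu|_{k+1}$ via \eqref{pa}, and the approximation-in-$h$ piece gives $\alpha_1 h^k|\bu|_{k+1}$; (iv) the truncation terms $(\tau^{n+1},\bphi_h^{n+1})$ giving $\Delta t^2\|\bu_{ttt}\|$ and the extrapolation error giving $\Delta t^2\|\nabla\bu_{tt}\|$ (times $\norm{|\nabla\bu|}_{\infty,0}$ from the nonlinearity). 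Every right-hand term is then handled by Cauchy--Schwarz and Young's inequality, hiding a small multiple of $\nu\|\nabla\bphi_h^{n+1}\|^2$ (and of the curl/div coercive terms where appropriate) on the left and leaving a $C\nu^{-1}\|\bphi_h^{n+1}\|^2$-type term to be absorbed by Gronwall.

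Then I would multiply through by $4\Delta t$, sum from $n=1$ to $N-1$ so the time-derivative terms telescope, bound the $\boldsymbol\eta$-contributions using the discrete norms \eqref{dn} and the approximation estimates \eqref{ap1}--\eqref{ap10}, \eqref{pa}, drop the nonnegative $\alpha_1\|\nabla\times\bphi_h^{n+1}\|^2$ and $\alpha_2\|\nabla\cdot\bphi_h^{n+1}\|^2$ where they are not needed in the stated bound, and apply the discrete Gronwall Lemma~\ref{gl} (this requires $\Delta t$ small enough that $1-\Delta t\,d_n>0$, i.e., a time-step restriction of the form $\Delta t\,C\nu^{-1}<1$ that is standard and harmless). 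Finally I would use the triangle inequality $\|\be^{n}\|\le\|\boldsymbol\eta^n\|+\|\bphi_h^n\|$ (and similarly for $\nabla\be$) to convert the bound on $\bphi_h$ into the bound on $\be$ stated in \eqref{ert}, matching the exponents: $h^{2k+2}$ for the $\bu_t$- and pressure-interpolation pieces, $h^{2k}$ for the nonlinear and curl pieces, $H^{2k}$ from the coarse-space projection, $\Delta t^2$ from the stabilization-vs-extrapolation-in-time terms, and $\Delta t^4$ from the BDF2 truncation and the second-order extrapolation of the nonlinear argument.

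The main obstacle, as usual in such proofs, is the careful bookkeeping of the nonlinear term: one must expand $b(2\bu^n-\bu^{n-1},\bu^{n+1},\bphi_h^{n+1})-b(2\bu_h^n-\bu_h^{n-1},\bu_h^{n+1},\bphi_h^{n+1})$ into the ``velocity-argument'' error, the ``convecting-field'' error, and the extrapolation-consistency error, choose for each piece whether to apply \eqref{n3} or the sharper \eqref{n2} (the latter is what keeps the $L^\infty$-in-time $H^1$ regularity of $\bu$ rather than something stronger), and make sure no term forces a factor worse than $\nu^{-1}$ or a power of $\Delta t$ lower than $2$ outside the coercive absorption. A secondary delicate point is that the discrete curl coercivity term $\alpha_1\|\nabla\times\bphi_h^{n+1}\|^2$ is only conditionally helpful (BDF2LE produces the cross term $\alpha_1(\nabla\times(\bphi_h^{n+1}-\bphi_h^n),\nabla\times\bphi_h^{n+1})$ rather than a clean square, cf. the energy balance theorem above), so I would either bound the curl consistency terms purely by $\nu\|\nabla\bphi_h^{n+1}\|^2$ via $\|\nabla\times\bv\|\le\|\nabla\bv\|$, or track a discrete-in-time telescoping of $\alpha_1\|\nabla\times\bphi_h^n\|^2$ exactly as in Lemma~\ref{lemma}; the former is cleaner and is what the stated estimate \eqref{ert} reflects.
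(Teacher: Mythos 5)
Your proposal follows essentially the same route as the paper's Appendix A proof: the same consistency equation with the $Intp$ residual, the same $\be=\bfeta+\bphi_h$ splitting tested against $\bphi_h^{n+1}$ with the BDF2 identity, the same three-way insertion of $\nabla\times\bu^n$ and $P_{L_H}(\nabla\times\bu^n)$ to treat the coarse-mesh term, and the same Young/Gronwall/triangle-inequality finish. The only (immaterial) deviations are your use of an $L^2$/Stokes projection in place of the paper's interpolant and your remark on a $\Delta t\,C\nu^{-1}<1$ restriction, which the paper avoids because the Gronwall sum on the right stops at $n=N-1$.
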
 
	
	\begin{proof}
	See Appendix A.
	\end{proof}
	
Theorem \ref{error}, approximation properties (\ref{ap1})-(\ref{ap10}) and the estimation (\ref{pa})  immediately yields the following Corollary, proving second order accuracy both in time and space.
	\begin{corollary} \label{cor}
In addition to the regularity assumptions of (\ref{ra}), consider the Taylor-Hood finite element spaces $(\bfX_h,Q_h)=(P_2,P_1)$, the coarse mesh size $H\leq {O}({h}^{1/2})$ , the artificial viscosity parameter $\alpha_1=h^2$ and the grad-div stabilization parameter $\alpha_2\leq O(1)$. Then, assuming $\be^0\neq\be^1$ to be $0$, the error in velocity satisfies, for all $\Delta t>0$
		\begin{eqnarray}
		\|\be^N\|^2 + \|2\be^{N}-\be^{N-1}\|^2+\sum_{n=1}^{N-1}\|\be^{n+1}-2\be^n+\be^{n-1}\|^2+\nu\Delta t\sum_{n=1}^{N-1}\norm{\nabla\be^{n+1}}^2
		\leq C \bigg(h^4+\Delta t^4\bigg).
		\end{eqnarray}
	\end{corollary}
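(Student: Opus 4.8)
The plan is to substitute the Taylor--Hood choice $k=2$ together with the prescribed parameters $\alpha_1=h^2$, $\alpha_2\le O(1)$ and coarse-mesh scaling $H\le O(h^{1/2})$ directly into the right-hand bracket of Theorem \ref{error}, and then sort the resulting terms by their order in $h$ and $\Delta t$. First I would dispose of the two start-up terms $\|\be^1\|^2$ and $\|2\be^1-\be^0\|^2$: under the stated initialization $\be^0=\be^1=0$ they vanish identically and contribute nothing, so the whole bound reduces to the remaining consistency and truncation terms.

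Next I would evaluate the spatial terms. With $k=2$ one has $h^{2k+2}=h^6$, so $\nu^{-1}h^{2k+2}\norm{|\bu_t|}^2_{2,k+1}$ and $\alpha_2^{-1}h^{2k+2}\norm{|p|}^2_{2,k+1}$ (the latter using $\alpha_2^{-1}=O(1)$) are both $O(h^6)$, while $h^{2k}=h^4$ makes $\nu^{-1}h^{2k}\norm{|\bu|}^2_{2,k+1}\norm{|\nabla\bu|}^2_{\infty,0}$ and $\nu h^{2k}\norm{|\bu|}^2_{2,k+1}$ the two dominant $O(h^4)$ spatial contributions. Inserting $\alpha_1=h^2$, hence $\alpha_1^2=h^4$, turns the two artificial-viscosity consistency terms into $\nu^{-1}h^4\,h^{2k}\norm{|\bu|}^2_{2,k+1}=O(h^8)$ and $\nu^{-1}h^4\,H^{2k}\norm{|\bu|}^2_{2,k+1}$. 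For the latter the mesh calibration is essential: with $H\le O(h^{1/2})$ and $k=2$ we get $H^{2k}=H^4\le O(h^2)$, so the term is $O(h^6)$ --- which is precisely the purpose of choosing $H=O(h^{1/2})$, keeping the coarse-space consistency error subdominant to the second-order rate.

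The temporal terms reduce as follows: the two genuine truncation contributions $\nu^{-1}\Delta t^4\norm{|\bu_{ttt}|}^2_{2,0}$ and $\nu^{-1}\Delta t^4\norm{|\nabla\bu|}^2_{\infty,0}\norm{|\nabla\bu_{tt}|}^2$ are already $O(\Delta t^4)$, while the mixed term $\nu^{-1}\alpha_1^2\Delta t^2\norm{|\bu_t|}^2_{\infty,0}=\nu^{-1}h^4\Delta t^2\norm{|\bu_t|}^2_{\infty,0}$ I would absorb either via $\Delta t\le 1$ (giving $h^4\Delta t^2\le h^4$) or by Young's inequality $h^4\Delta t^2\le\tfrac12(h^8+\Delta t^4)$, so that it falls within $O(h^4+\Delta t^4)$. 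Finally, all the solution-dependent norms $\norm{|\bu_t|}_{2,k+1}$, $\norm{|\bu|}_{2,k+1}$, $\norm{|\nabla\bu|}_{\infty,0}$, $\norm{|p|}_{2,k+1}$, $\norm{|\bu_t|}_{\infty,0}$, $\norm{|\bu_{ttt}|}_{2,0}$ and $\norm{|\nabla\bu_{tt}|}$ are finite by the regularity assumptions (\ref{ra}) and hence bounded by constants independent of $h$ and $\Delta t$; likewise $\exp(C\nu^{-1}T)$ is a fixed constant since $\nu$ and $T$ are fixed. Collecting the surviving lowest-order pieces ($O(h^4)$ from the two genuine spatial terms plus the absorbed mixed term, and $O(\Delta t^4)$ from the two time-truncation terms) and folding the $O(h^6)$ and $O(h^8)$ remainders into the leading order yields the asserted bound $\le C(h^4+\Delta t^4)$.

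The computation is essentially bookkeeping, so there is no deep obstacle; the single point requiring genuine care is the term $\nu^{-1}\alpha_1^2H^{2k}\norm{|\bu|}^2_{2,k+1}$, where one must verify that the scaling $H=O(h^{1/2})$ is sharp enough to demote it from a potential $O(h^4\cdot H^4)$ down to $O(h^6)$, since any coarser choice of $H$ would degrade the overall convergence below second order.
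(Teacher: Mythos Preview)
Your argument is correct and matches the paper's own approach, which simply records that the corollary follows immediately from Theorem \ref{error} by substituting $k=2$, $\alpha_1=h^2$, $\alpha_2=O(1)$, $H=O(h^{1/2})$ and using the approximation properties (\ref{ap1})--(\ref{ap10}) and (\ref{pa}); your term-by-term bookkeeping is in fact more explicit than the paper's one-line justification. One small overcorrection in your final paragraph: with $\alpha_1=h^2$ the coarse-mesh term $\nu^{-1}\alpha_1^2 H^{2k}=h^4H^4$ is already $O(h^4)$ for any bounded $H$, so the calibration $H=O(h^{1/2})$ is not actually sharp here---it becomes the binding constraint only under the alternative scaling $\alpha_1=O(h)$ alluded to in the introduction.
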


\section{Numerical Experiments}

In this part, we provide three numerical illustrations to test the numerical algorithm (\ref{1})-(\ref{3}). The first test verifies the order of the convergence rates which are predicted Corollary \ref{cor}. In addition, we demonstrate the efficiency of the BDF2LE based SAV method on flow around a cylinder and two dimensional flow between two offset circles problems. All simulations are carried out by considering Taylor-Hood finite element spaces $(P_2,P_1)$ to approximate velocity and pressure and $P_1$ for the large scale space $L_H$. All the numerical experiments are implemented with the finite element software package Freefem++ \cite{hec}.

\subsection{Convergence Rates}
This subsection verifies the predicted convergence rates of our numerical scheme (\ref{1})-(\ref{3}). For this purpose, we consider (\ref{nse}) with the prescribed solution:
		\begin{align}\label{truesol}
		\bu=\left[%
		\begin{array}{c}
		(1+0.01t)sin(2\pi y) \\
		(1+0.01t)cos(2\pi x)%
		\end{array}%
		\right],\quad \quad \quad \quad 
		p=x+y
		\end{align}	
Simulations are performed in a unit square $\Omega:=[0,1]^2$ with $\nu=1$ and the last time $T=0.01$. The coarse mesh size $H=\sqrt{h}$, the parameters $\alpha_1=h^2$ and $\alpha_2=1$ are chosen. The external force $\bf f$ is determined by the true solution (\ref{truesol}). Boundary conditions are set to be true solutions on $\partial \Omega$. We compute approximate solutions on successive mesh refinements and the velocity errors are computed in the discrete norm  $L^2(0,T;{H}^1(\Omega))$
		$$\|\textbf{u}-\textbf{u}^h\|_{2,1}=\left\{\Delta t \sum_{n=1}^{N}\|\textbf{u}(t^n)-\textbf{u}_{n}^{h}\|^{2}\right\}^{1/2}.$$ \\
Results for errors are shown in Table $\ref{table:tab1}$ and second order accuracy is observed, exactly as the theory predicts.
		\begin{table}[h!]
			\begin{center}
				\begin{tabular}{|c|c|c|c|}
					\hline
					$h$ & $\Delta t$ &$\|\textbf{u}-\textbf{u}_h\|_{2,1}$ & Rate\\
					\hline
					1/4&0.01&5.25977e-1 &--  \\
					\hline
					1/8&0.005& 1.3403e-1 &1.96  \\
					\hline
					1/16&0.0025& 3.397e-2 &1.98   \\
					\hline
					1/32&0.00125&8.50843e-3 &1.99  \\
					\hline
					1/64&0.000625&2.13204e-3 &1.99 \\
					\hline
					1/128&0.0003125&5.32992e-4&2.00\\
					\hline
				\end{tabular}
				\caption{Errors and convergence rates for the Algorithm \ref{Alg1}. }
				\label{table:tab1}
			\end{center}
		\end{table}
		\subsection{Flow Around a Cylinder}
The second example is considered to verify the efficiency of our scheme (\ref{1})-(\ref{3}) on two dimensional flow along a rectangular channel in which a cylinder is depicted seen in Figure \ref{fig:d}. This famous benchmark problem is highly preferable since it exhibits real flow characteristics and provides highly reliable data that allows to measure the accuracy of codes. In addition, simulating this flow accurately is critical to observe the behavior of eddies. The study \cite{MS} has addressed this problem and presented the computational results to define the reference values. The accuracy of these reference values has been significantly improved in \cite{J04}.

\begin{figure}[!hb]
	\centering
	\includegraphics[width=110mm]{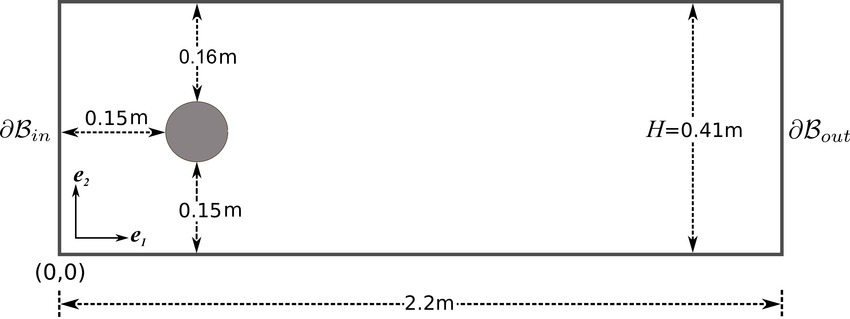}
	\caption{ Domain $\Omega$ of the test problem}
	\label{fig:d}
\end{figure}
 The inflow and outflow velocities are presented as
		\begin{eqnarray}
		u_1(0,y,t)&=&u_1(2.2,y,t)=\dfrac{6}{0.41^2}sin\big(\dfrac{\pi t}{8}\big)y(0.41-y),\nonumber\\\quad\nonumber\\ u_2(0,y,t)&=& u_2(2.2,y,t)= 0.\nonumber
		\end{eqnarray}
We enforce no-slip boundary conditions at the cylinder and walls. We choose zero initial condition $\bu(x,y,t)=0 $, the kinematic viscosity $\nu=10^{-3}$ and the forcing $\bff=0$. Also, we choose artificial viscosity parameters as $\alpha_1=h^2$ and $\alpha_2=0.001$ for regular mesh size $h$ and the coarse mesh size $H=\sqrt{h}$. In all computations, we use a very coarse mesh consisting only $15.485$ total degrees of freedom with a last time $T=8$ and time-step $\Delta t=0.01$.\\
We present the flow development in Figure \ref{fig:u} which matches with the results of \cite{J04, MS}. With increasing inflow, we observe the appearance of two vortices behind the cylinder, see $t=2$ and $t=4$. Then vortices leave the cylinder and the formation of a vortex street is clearly seen, which lasts until $t=8$.

\begin{figure}[!h]
	\centering
	\begin{subfigure}[b]{1\linewidth}
		\includegraphics[width=185 mm]{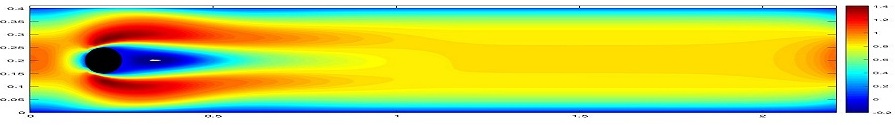}
	\end{subfigure}
	\begin{subfigure}[b]{1\linewidth}
		\includegraphics[width=185 mm]{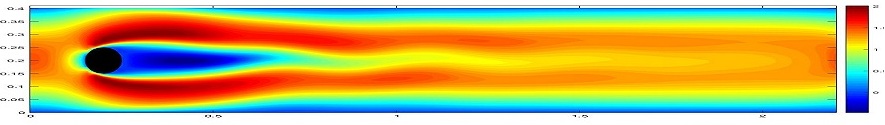}
	\end{subfigure}
	\begin{subfigure}[b]{1\linewidth}
		\includegraphics[width=185 mm]{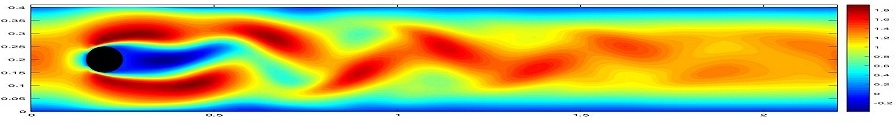}
	\end{subfigure}
	\begin{subfigure}[b]{1\linewidth}
		\includegraphics[width=185 mm]{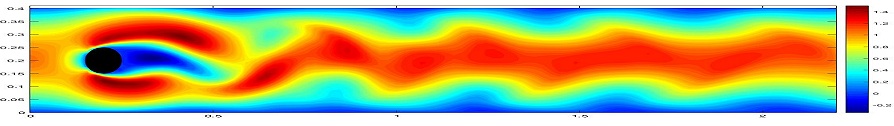}
	\end{subfigure}
	\begin{subfigure}[b]{1\linewidth}
		\includegraphics[width=185 mm]{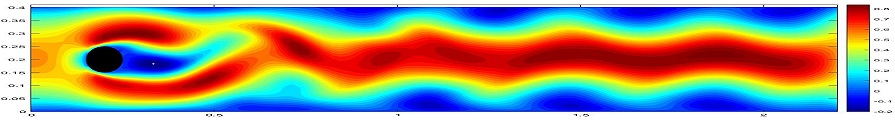}
	\end{subfigure}
	\begin{subfigure}[b]{1\linewidth}
		\includegraphics[width=185 mm]{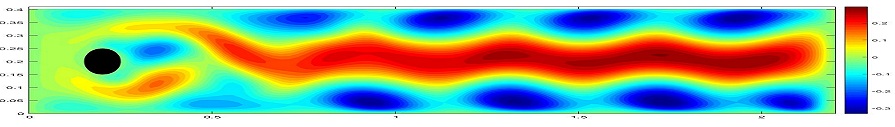}
	\end{subfigure}
	\caption{The velocity of the scheme (\ref{1})-(\ref{3}) at $t = 2, 4, 5, 6, 7, 8$ (from up to down).}
	\label{fig:u}
\end{figure}
\vspace{25 mm}
The most frequently monitored quantities of interest that are considered in the literature for this flow are  the drag $c_d(t)$, the lift coefficient $c_l(t)$ and pressure drop across the object $\Delta p(t)$.  
These values are defined in \cite{MS} as follows:
\begin{eqnarray}
c_d(t)&=&\dfrac{2}{\rho L U_{max}^2}\int_{S}\big(\rho\nu\dfrac{\partial\bu_{t_S}}{\partial n}n_y-p(t)n_x\big)dS\nonumber\\\quad\nonumber\\
c_l(t)&=&-\dfrac{2}{\rho L U_{max}^2}\int_{S}\big(\rho\nu\dfrac{\partial\bu_{t_S}}{\partial n}n_x+p(t)n_y\big)dS\nonumber\\\quad\nonumber\\
\Delta p(t)&=& p(t;0.15,0.2)-p(t;0.25,0.2)\nonumber
\end{eqnarray}
where $S$ is the boundary of the cylinder, $U_{max}$ is the maximum mean flow, $L$ is the diameter of the cylinder, $n=(n_x,n_y)^T$ is the normal vector on the circular boundary $S$ and $\bu_{t_S}$ is the tangential velocity for $t_S=(n_y,-n_x)^T$ the tangential vector.

The plots of evolution of the drag and lift coefficient and the pressure difference are also presented in Figure \ref{fig:dlp} and the graphs are consistent with DNS results of \cite{J04, MS}.
\begin{figure}[!ht]
	\centering
	\begin{subfigure}[b]{0.5\linewidth}
		\includegraphics[width=70 mm]{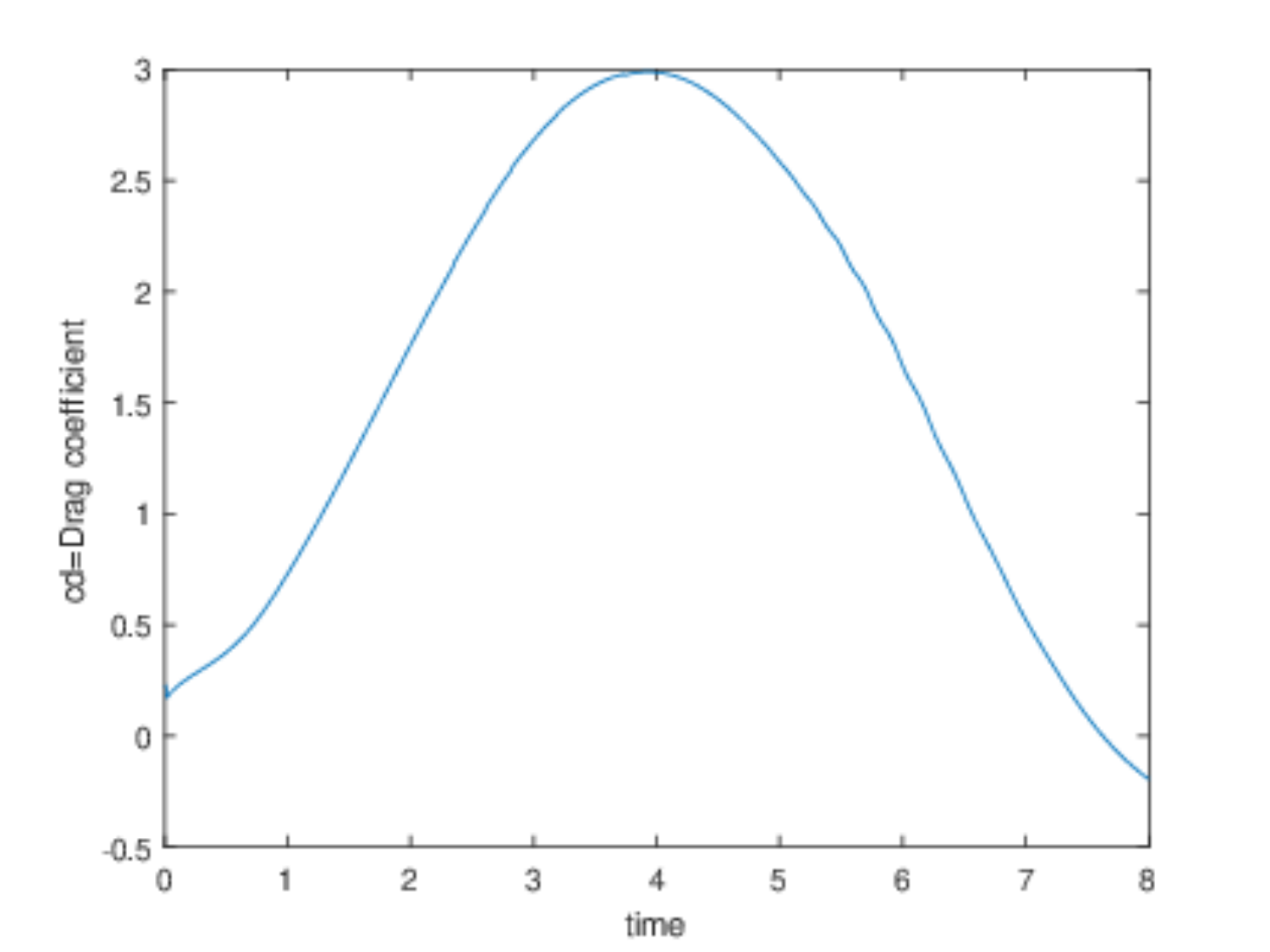}
		\caption{Evolution of $c_{d,max}$}
	\end{subfigure}
	\begin{subfigure}[b]{0.4\linewidth}
		\includegraphics[width=70 mm]{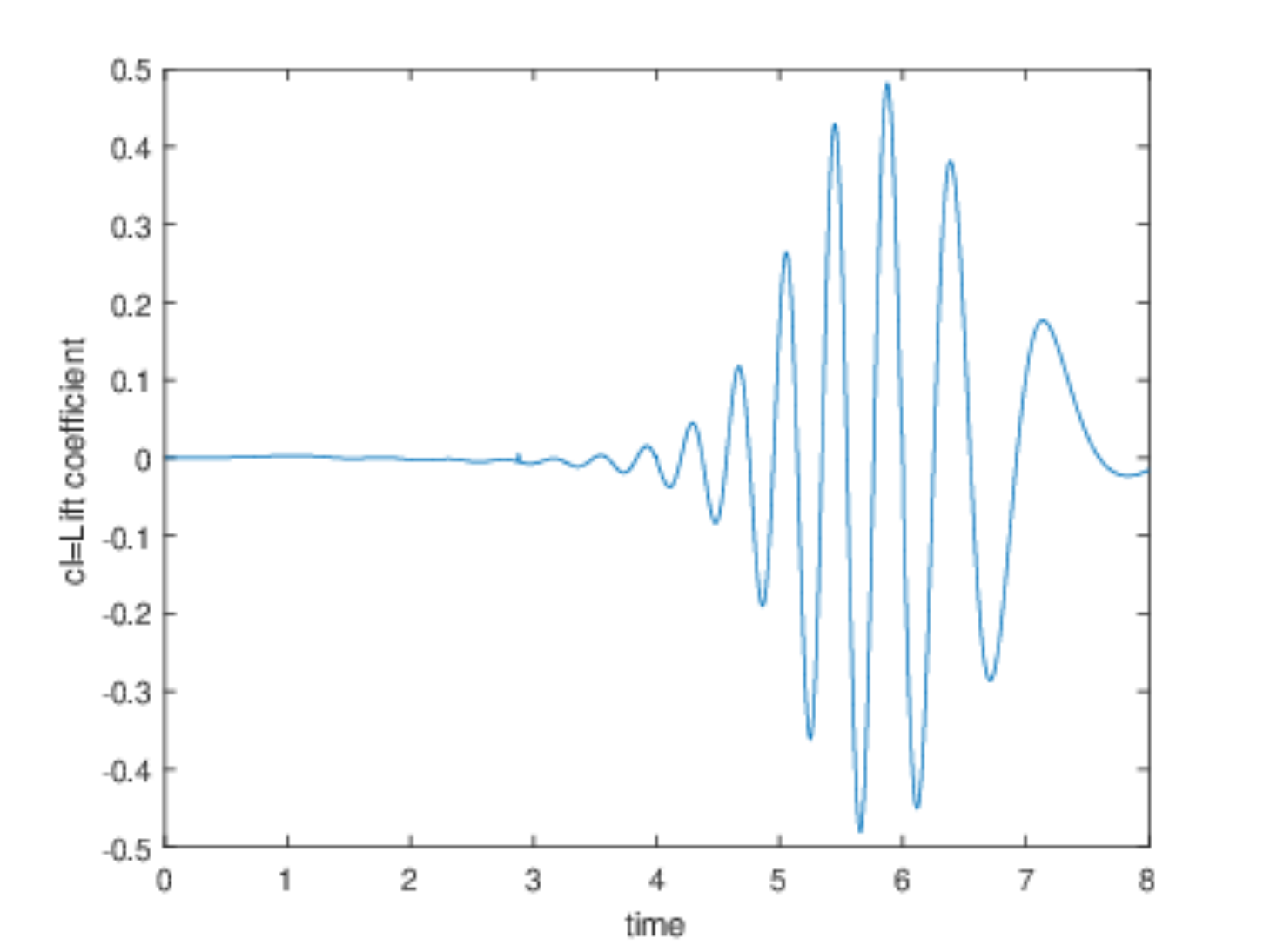}
		\caption{ Evolution of $c_{l,max}$}
	\end{subfigure}
\begin{subfigure}[b]{0.4\linewidth}
	\includegraphics[width=70 mm]{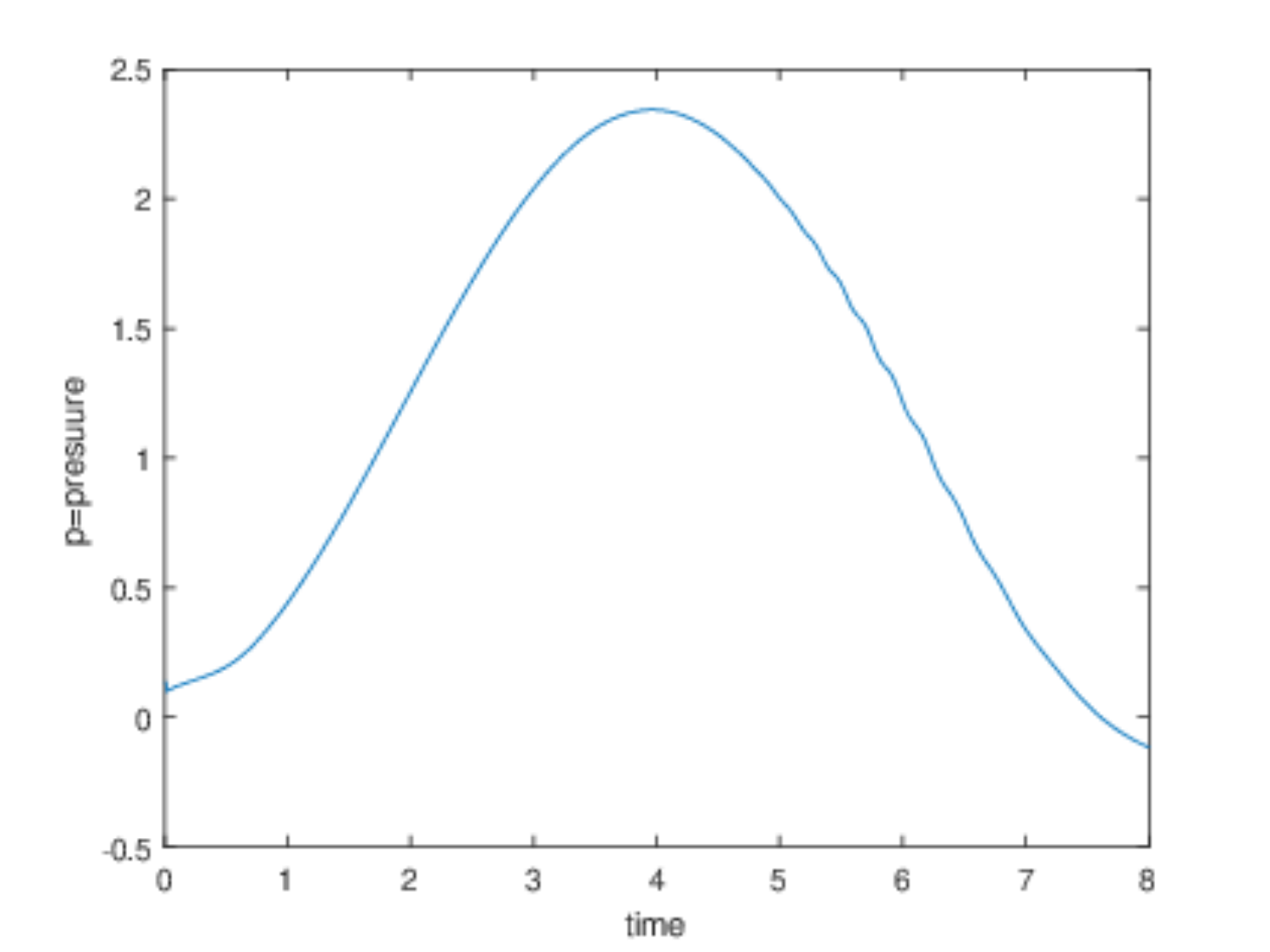}
	\caption{ Evolution of $\Delta p$ }
\end{subfigure}
	\caption{Evolution of maximum value of drag values, lift values and pressure differences obtained when using the scheme (\ref{1})-(\ref{3}) with $\Delta t = 0.01$}
	\label{fig:dlp}
\end{figure}

Note that in Table \ref{table:tab2}, we take only the maximum drag $c_{d,max} $ and maximum lift $c_{l,max}$ values behind the cylinder together with the times at which they occur. We consider the following reference intervals in \cite{MS}:
 $$c_{d,max}^{ref}\in [2.93, 2.97],\quad \quad c_{l,max}^{ref}\in [0.47, 0.49].$$
From this we observe that SAV with BDF2LE method provides the most accurate results for $c_{d,max} $ and $c_{l,max} $ compared with \cite{G} in which SAV method is studied by using CN time stepping scheme.
\begin{table}[h!]
	\begin{center}
\begin{tabular}{|c|c|c|c|c|}
			\hline
			Method & $c_{d,max}$ &$t(c_{d,max})$ & $c_{l,max}$&$t(c_{l,max})$ \\
			\hline
			Ref \cite{G}&2.87198&3.685 &0.436564&5.784  \\
			\hline
			Ref \cite{J04} &2.95092&3.93 &0.477995&5.69\\
			\hline
			Proposed Method&2.98803 & 3.93 &0.480608&5.88 \\
			\hline
		\end{tabular}
	\end{center}
	\caption{Comparison of maximum drag and lift coefficients and the times at which they occur.}
\label{table:tab2}
\end{table}

We like to note that the numerical studies of \cite{MS} are performed on very fine mesh consisting of $785000$ total degrees of freedom. Herein, we can find good results with a less computational time than required by a DNS. 
\section{ Flow Between Two Offset Circles}

The last experiment demonstrates the stability of SAV with BDF2LE method on two dimensional flow in annular region between two offset circles. The domain we use is a circle with an interior decentralized circle inside. Pick $r_1=1$, $r_2=0.1$ and $c=(c_1,c_2)=\big(\frac{1}{2},0\big)$. The domain is then given by
$$\Omega=\left\{(x,y): x^2+y^2\leq r_1^2\right\} \cap \left\{(x,y): (x-c_1)^2+(y-c_2)^2\geq r_1^2\right\}$$
The numerical solutions are computed on a Delunay-generated triangular mesh and an example mesh can be seen in Figure \ref{fig:mesh}.
\begin{figure}[h]
	\centering
	\includegraphics[width=90mm]{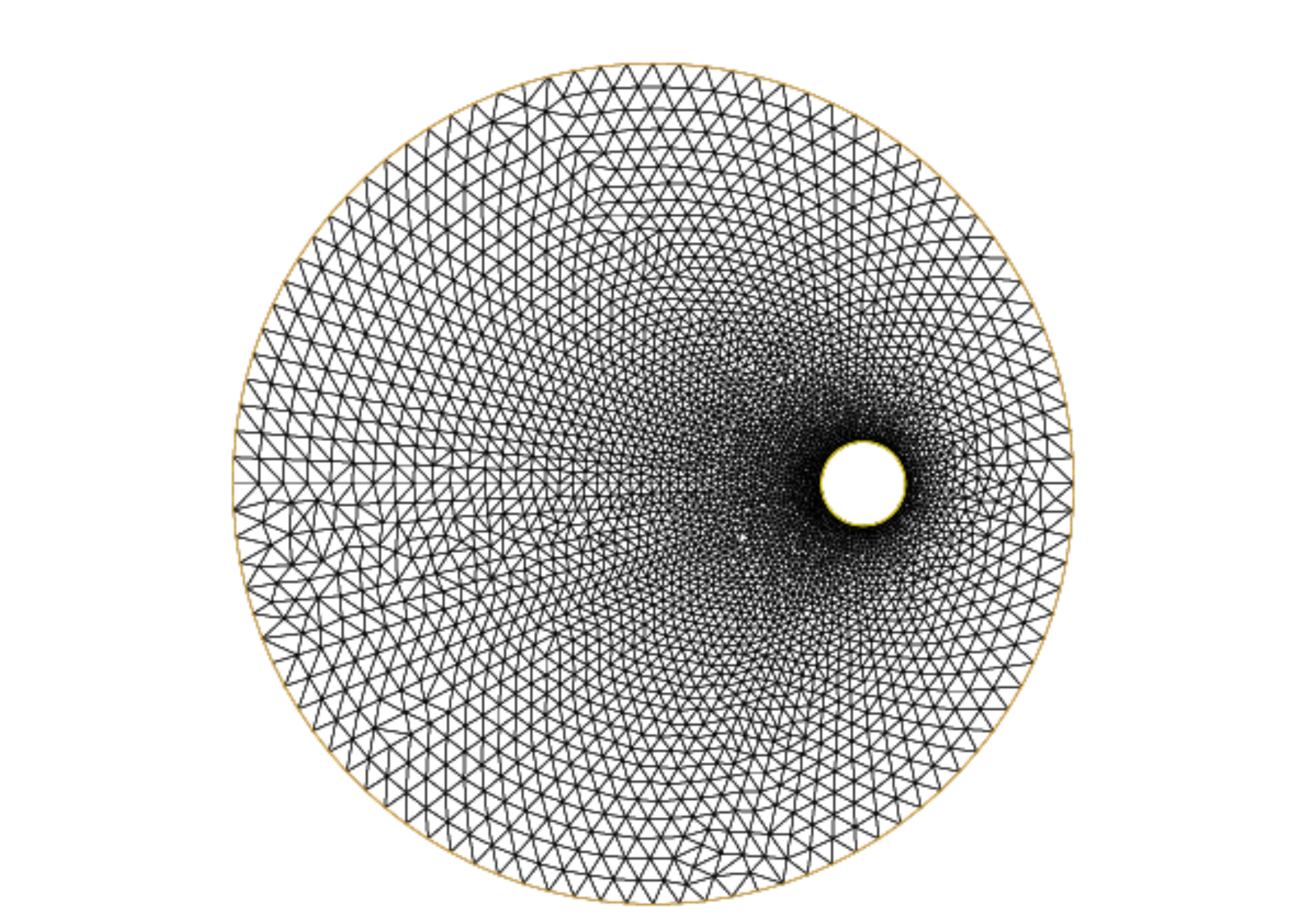}
	\caption{The domain of the test problem}
	\label{fig:mesh}
\end{figure}
Zero initial conditions have been used and no-slip boundary conditions are considered on both circles. We choose time step size $ \Delta t = 0.025$ and
the last time $T = 5$. The flow is generated by the body force rotating in the  counterclockwise direction
$$f(x,y,t)= (-4y(1-x^2-y^2), 4x(1-x^2-y^2))^T$$

Under the influence of the body force, the flow interacts with the inside disk . Then, we observe the formation of a vortex street called von K\'{a}rm\'{a}n. Then, this vortex street moves in the same way creating more complex flow
patterns. Further studies on this flow can be found in \cite{NL,J,LJ,MSL}. To verify the accuracy of our
method, we present some snapshots in Figure \ref{fig:streamline} for $Re=200$ at times $t=0.025, 1.25,3$ and $t = 5$. Snapshots are shown in the qualitative study \cite{NL}, we observe exactly such plots. 

\vspace{20 mm}

\begin{figure}[h!]
	\centering
	\begin{subfigure}[b]{0.4\linewidth}
	\includegraphics[width=80 mm]{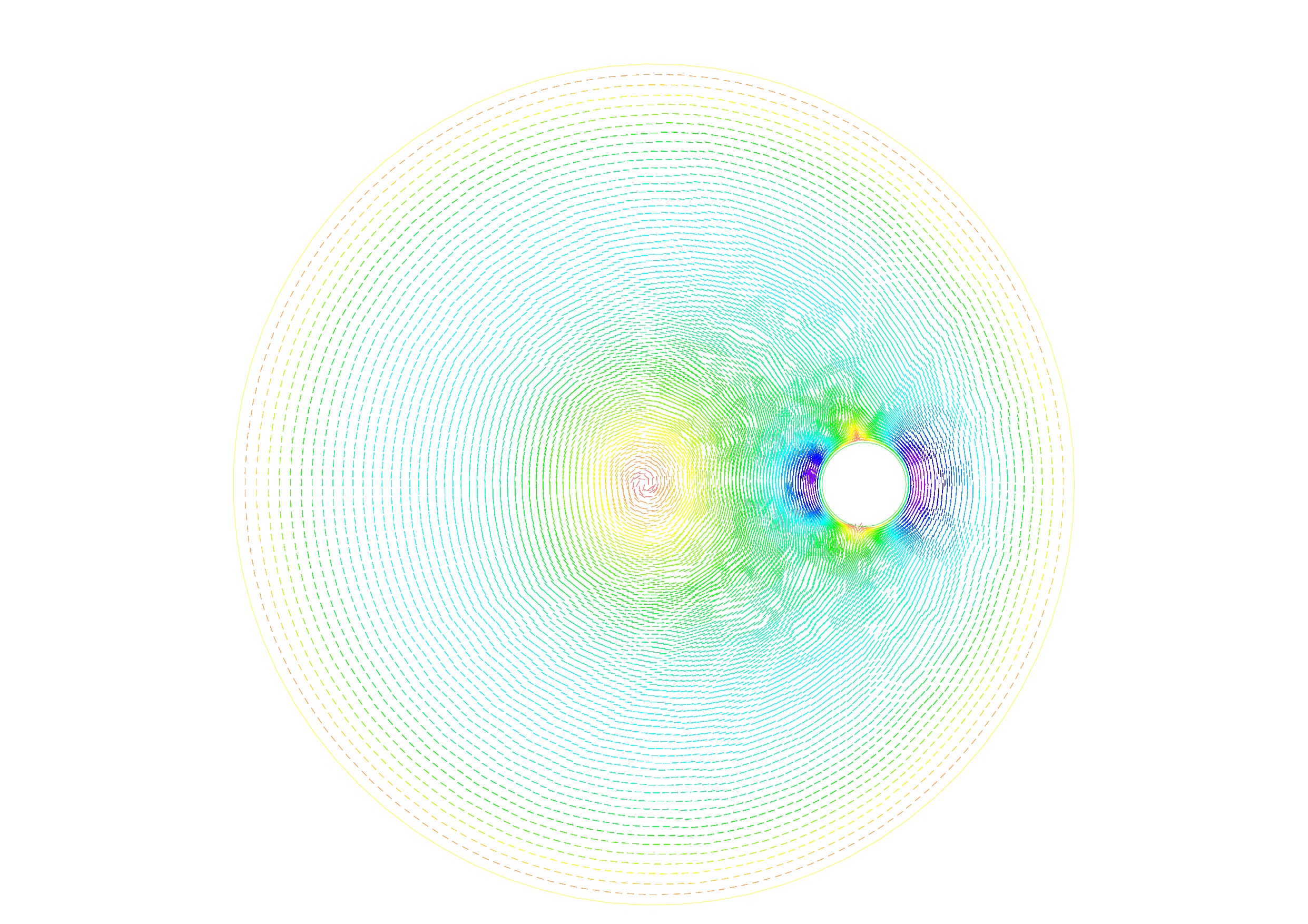}
	\caption{ $t=0.025$}
		\end{subfigure}
	\begin{subfigure}[b]{0.4\linewidth}
		\includegraphics[width=80 mm]{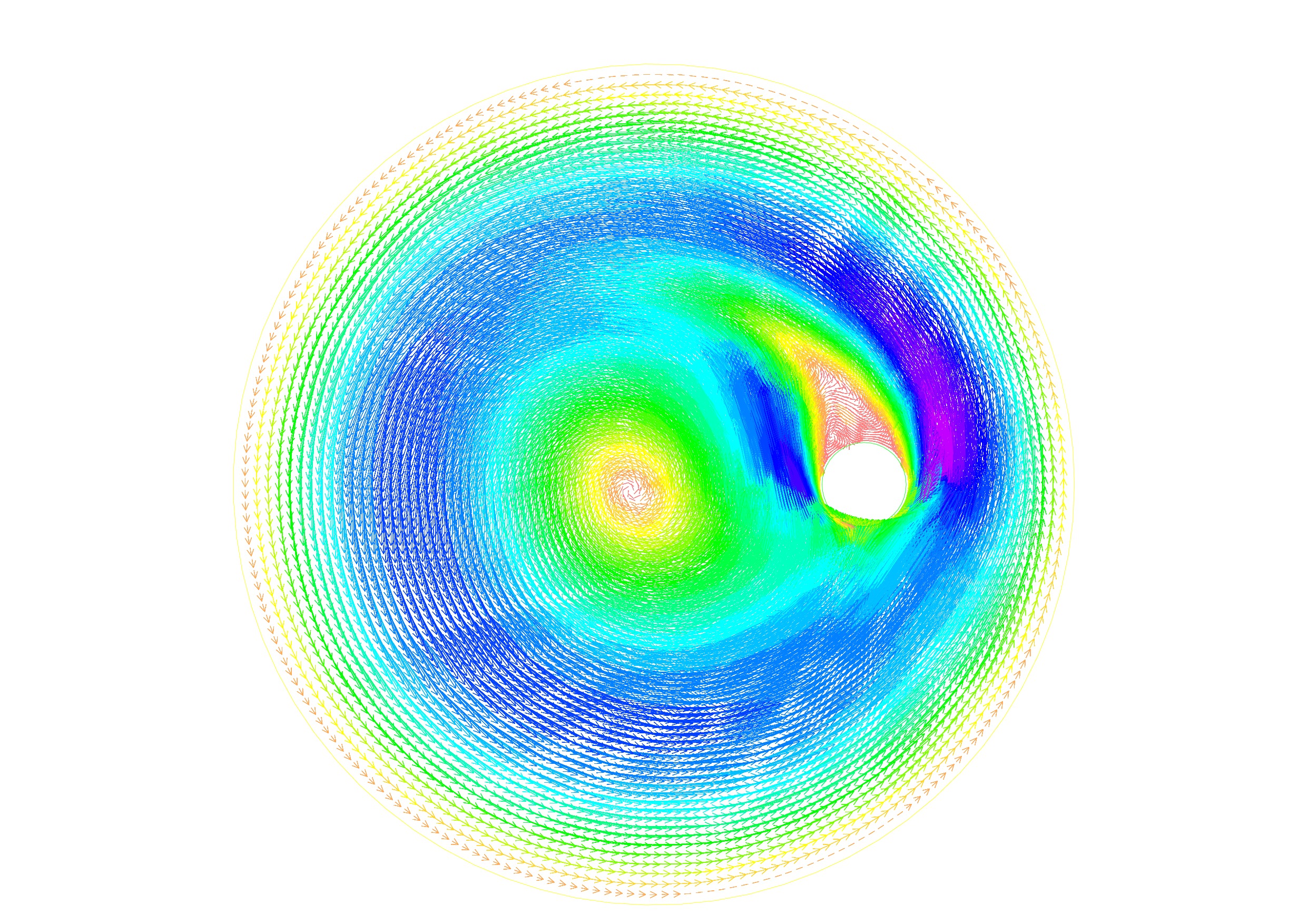}
		\caption{ $t=1.25$}
	\end{subfigure}
\begin{subfigure}[b]{0.4\linewidth}
	\includegraphics[width=80 mm]{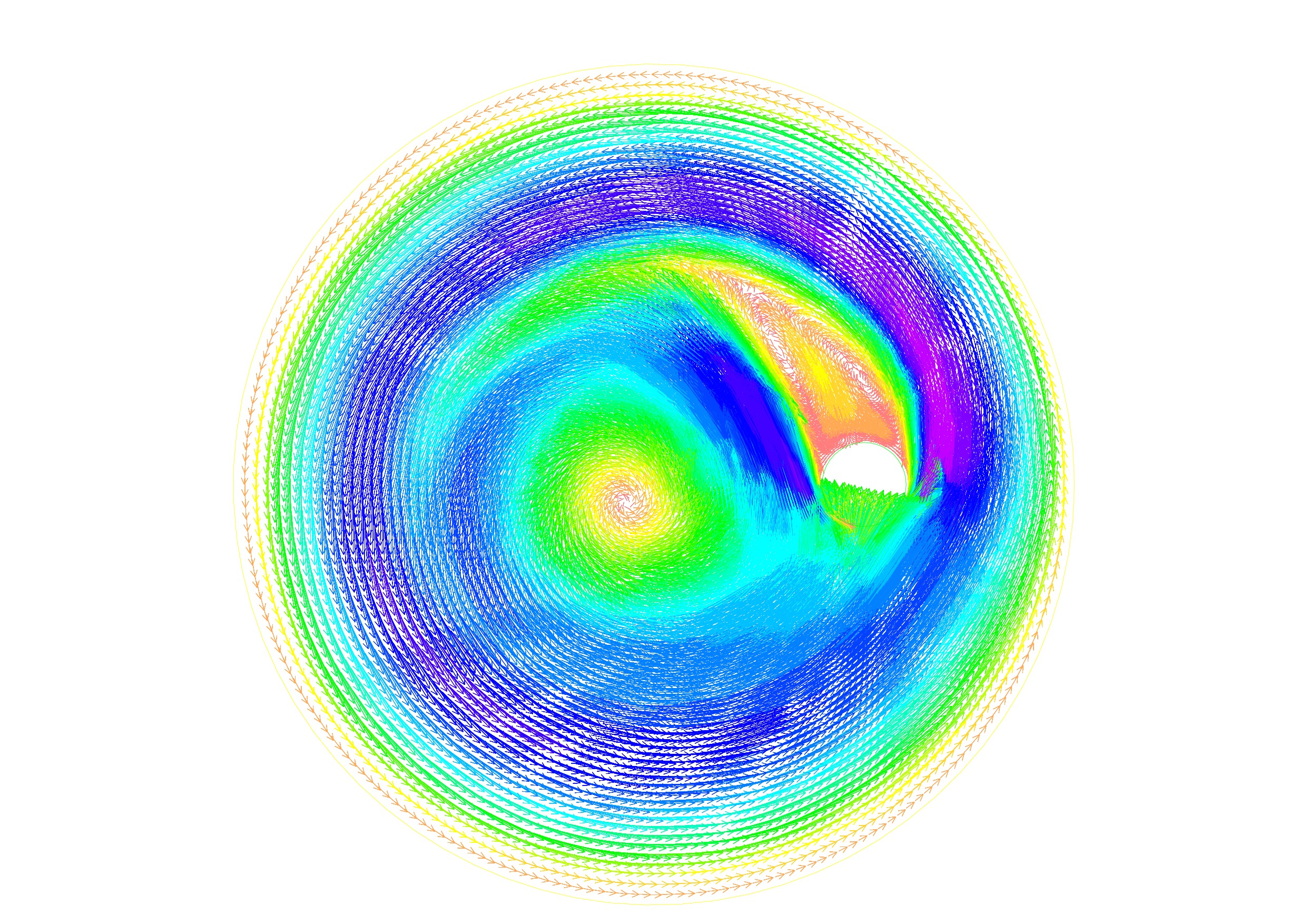}
	\caption{ $t=3$}
\end{subfigure}
\begin{subfigure}[b]{0.4\linewidth}
	\includegraphics[width=80 mm]{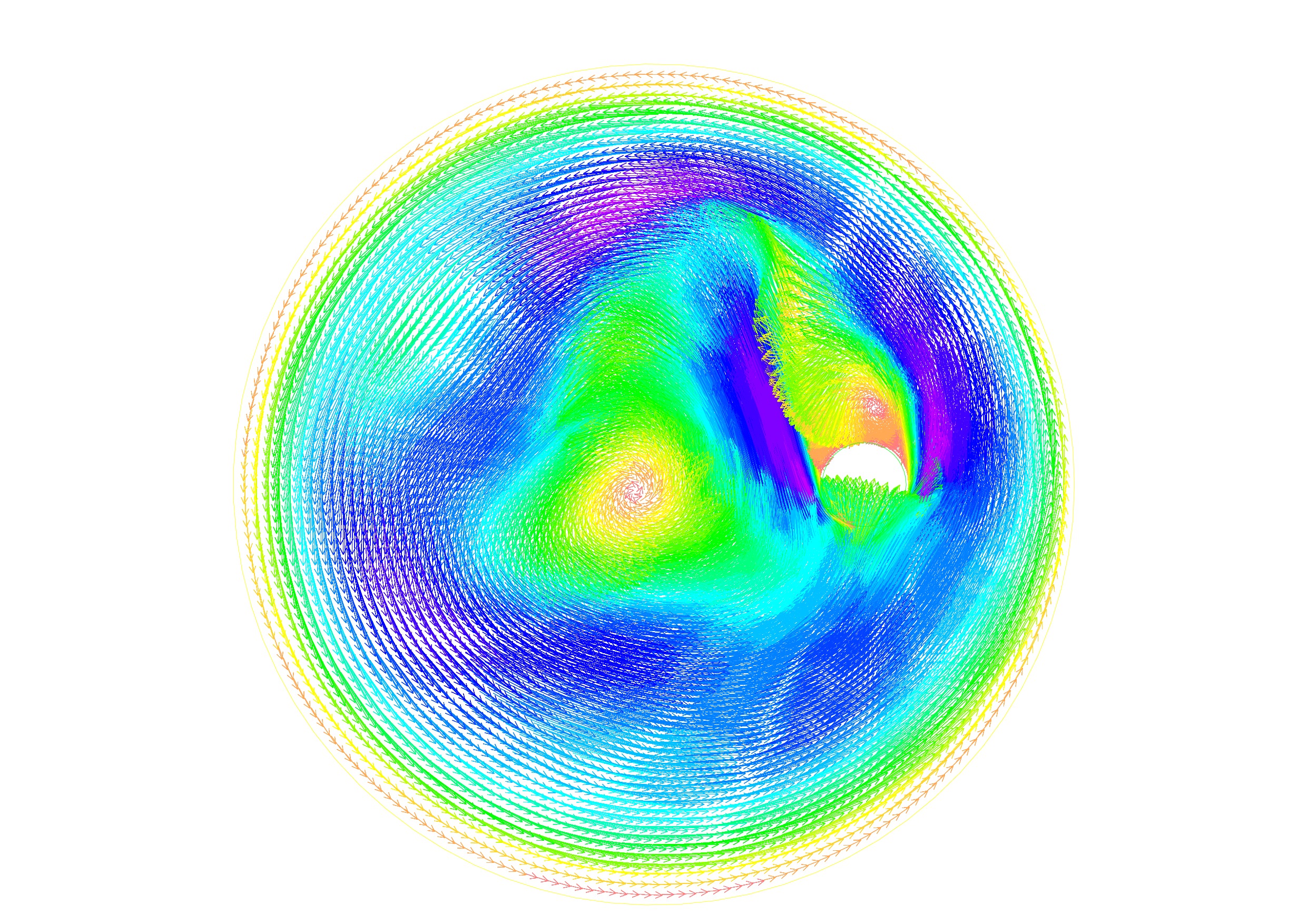}
	\caption{ $t=5$}
\end{subfigure}
\caption{Velocities for $\nu=\frac{1}{200}$}
	\label{fig:streamline}
\end{figure}

Popular quantities of interest in this experiment are the kinetic energy and the enstrophy values for $0 \leq t \leq 5$, defined by

$$Energy=\dfrac{1}{2}\norm{\bu}^2, \quad \quad \quad Enstrophy=\dfrac{1}{2}\nu\norm {\nabla\times\bu}^2$$ 

Figure \ref{fig:Energies} and Figure \ref{fig:Enstrophies} show the kinetic energy and enstrophy statistics for different Reynolds numbers. The curve marked with `BDF2LE-NOSAV' is computed by using BDF2LE method without SAV and `BDF2LE-SAV' by using BDF2LE method with SAV. For $Re=200$, we see that both methods are stable and consistent with \cite{MSL}. However, as Reynolds number increases, solution to BDF2LE-NOSAV method oscillates while BDF2LE-SAV method stays stable as seen in Figure \ref{fig:Energies} and Figure \ref{fig:Enstrophies}.
	\begin{figure}[h!]
	\centering
	\begin{subfigure}[b]{0.3\linewidth}
	\includegraphics[width=55 mm]{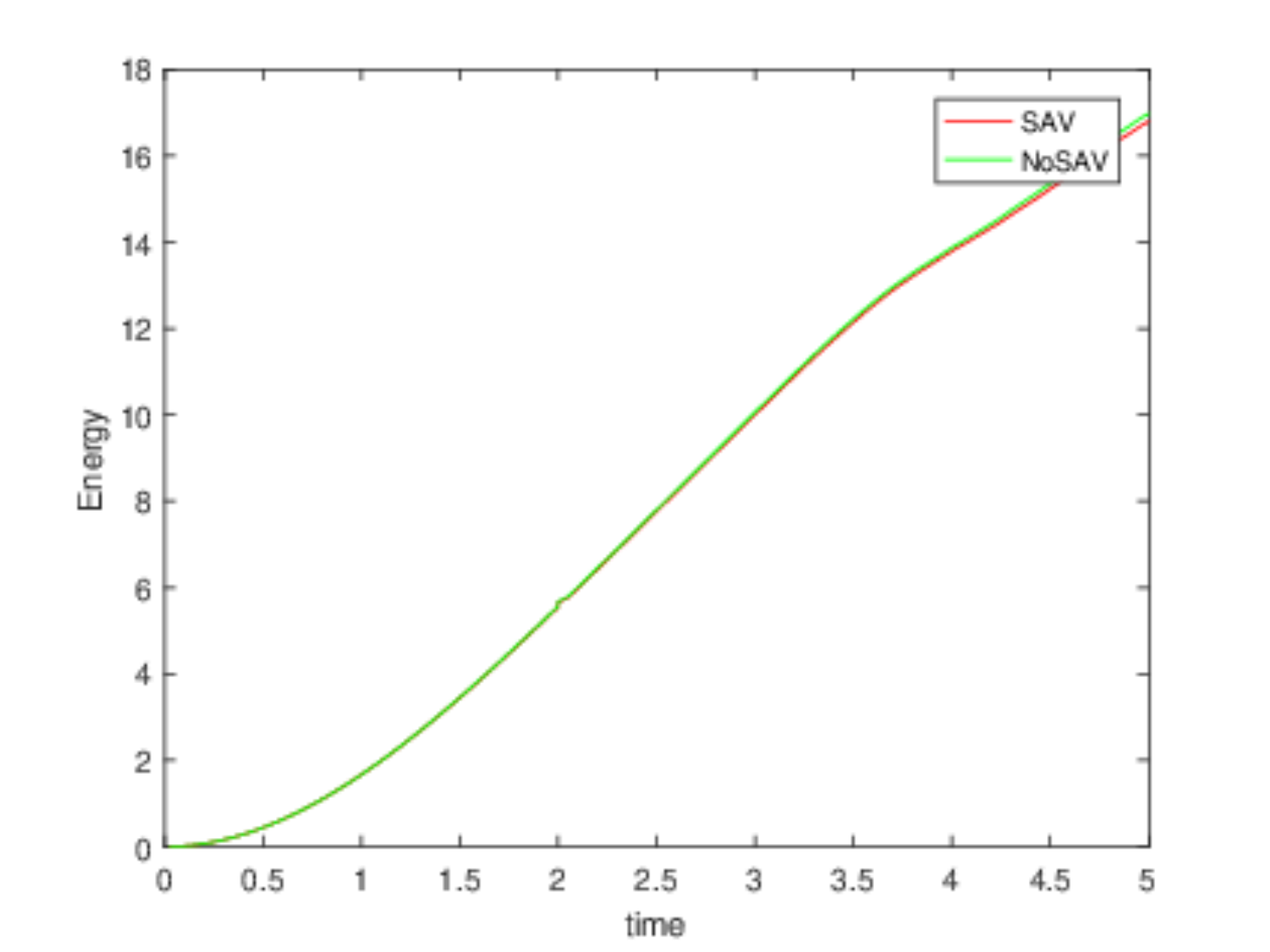}
	\end{subfigure}
\begin{subfigure}[b]{0.3\linewidth}
	\includegraphics[width=55 mm]{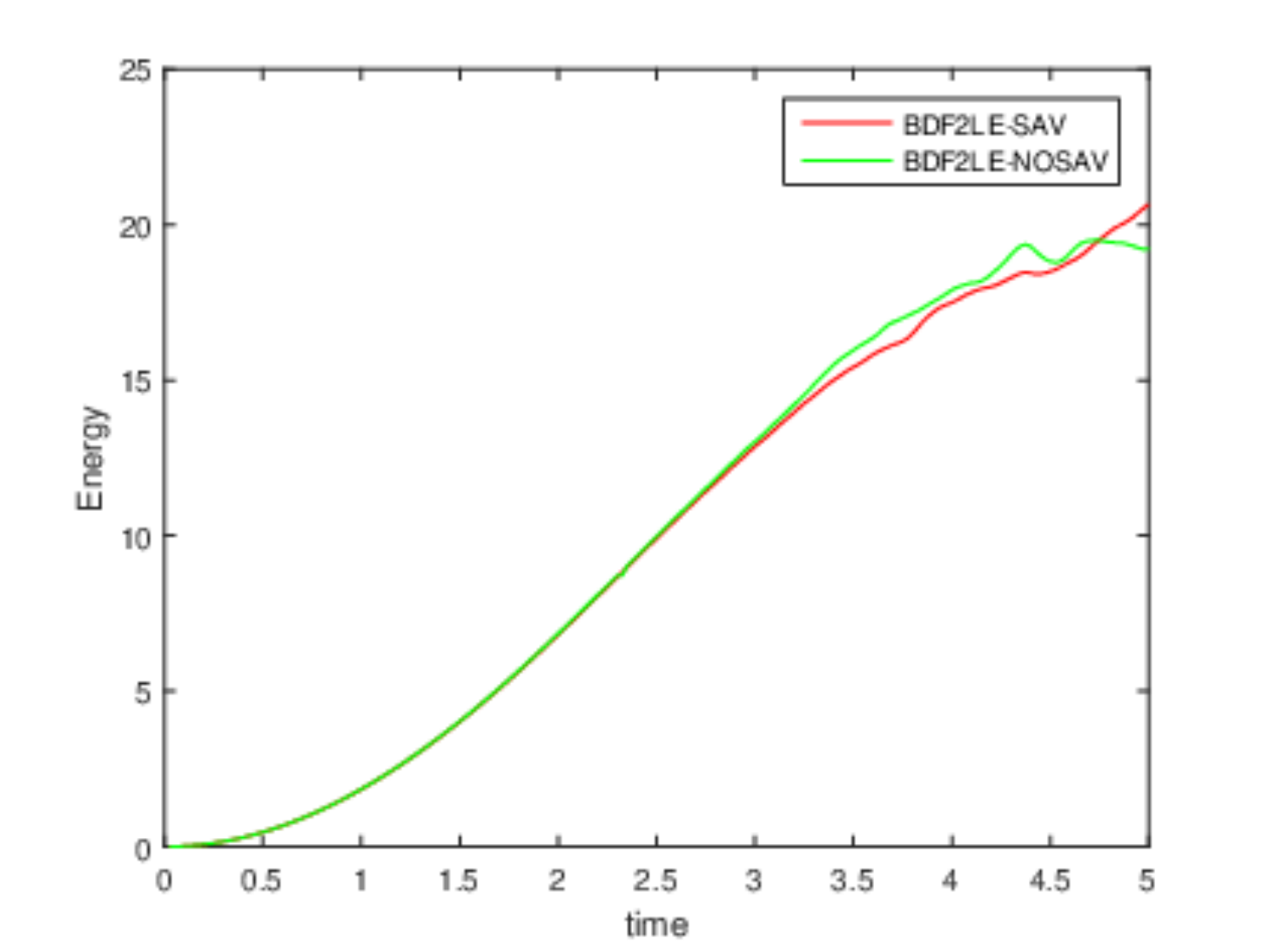}
\end{subfigure}
\begin{subfigure}[b]{0.3\linewidth}
	\includegraphics[width=55 mm]{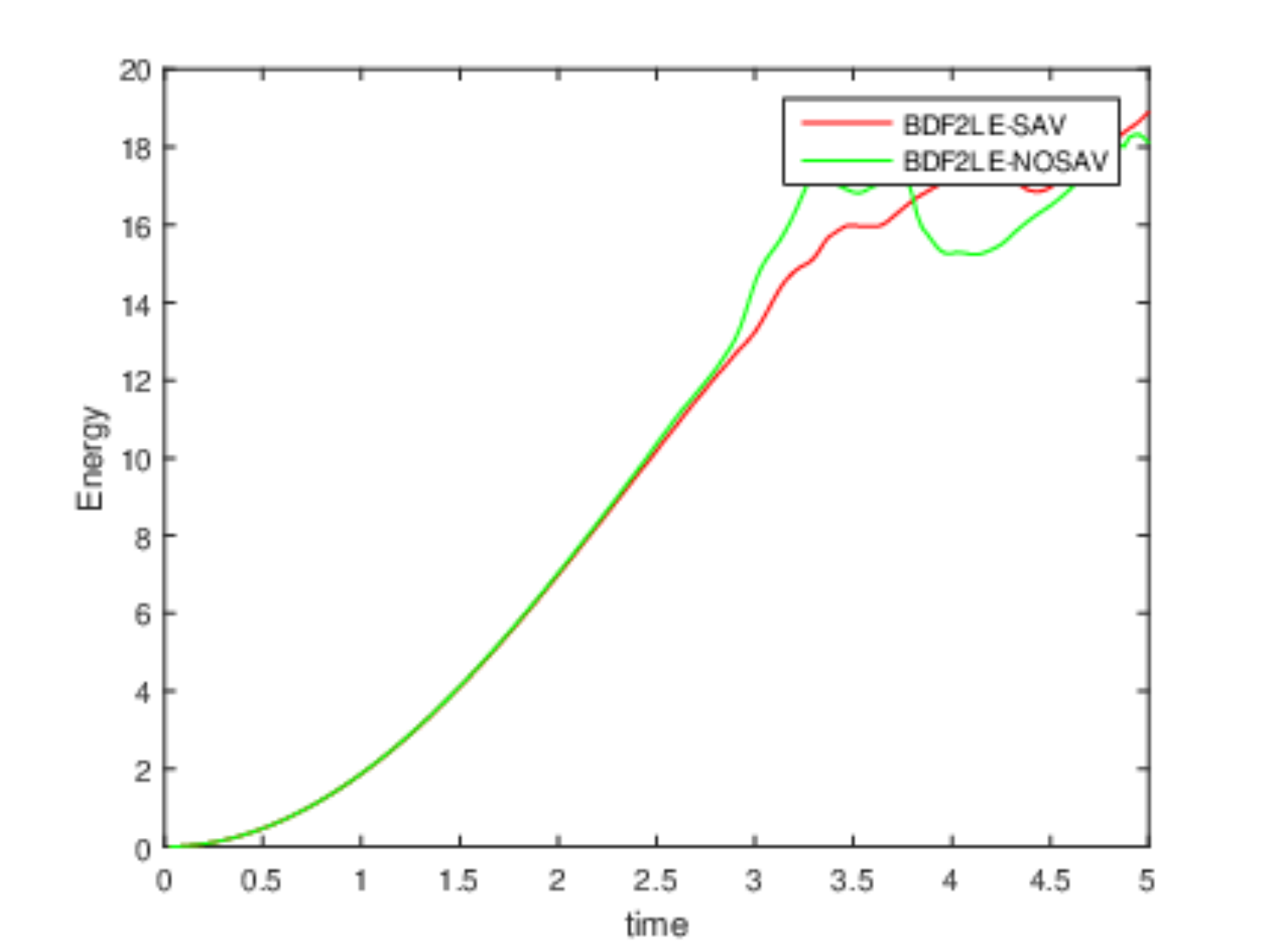}
\end{subfigure}
\caption{Time evolutions of energy for $Re=200,800,1200$ from left to right.}
	\label{fig:Energies}
\end{figure}
\begin{figure}[h!]
	\centering
	\begin{subfigure}[b]{0.3\linewidth}
		\includegraphics[width=55 mm]{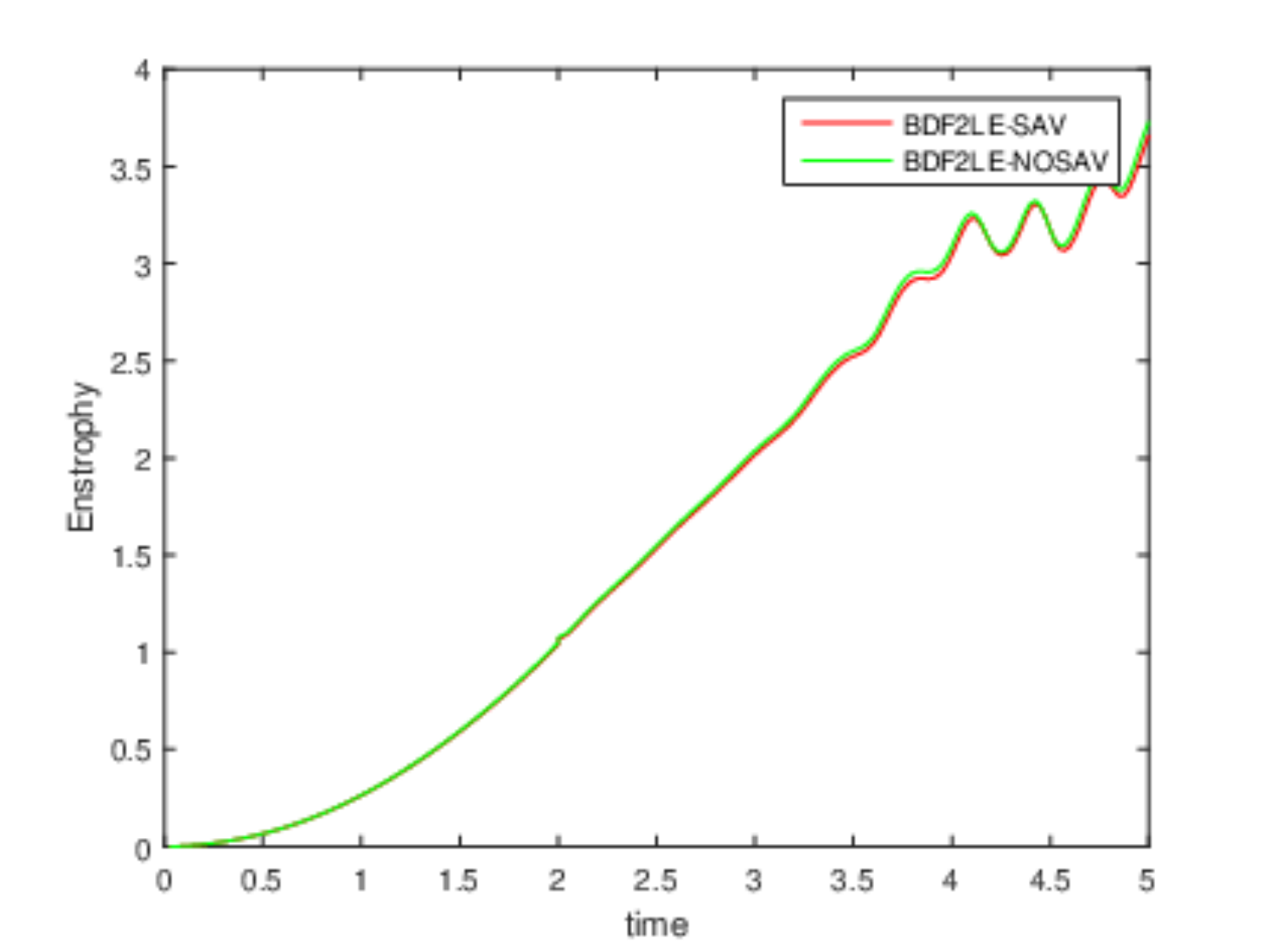}
	\end{subfigure}
	\begin{subfigure}[b]{0.3\linewidth}
		\includegraphics[width=55 mm]{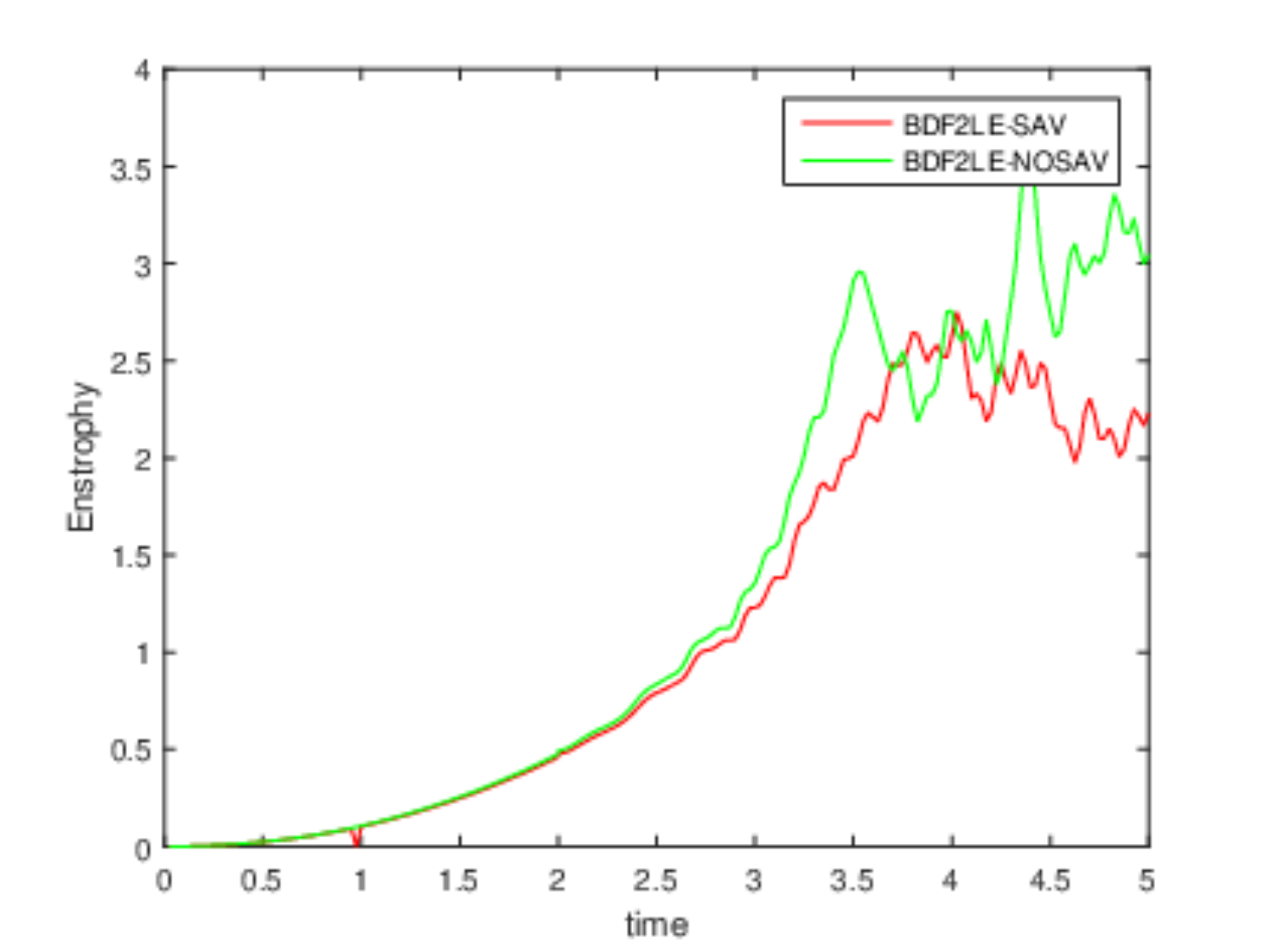}
	\end{subfigure}
	\begin{subfigure}[b]{0.3\linewidth}
		\includegraphics[width=55 mm]{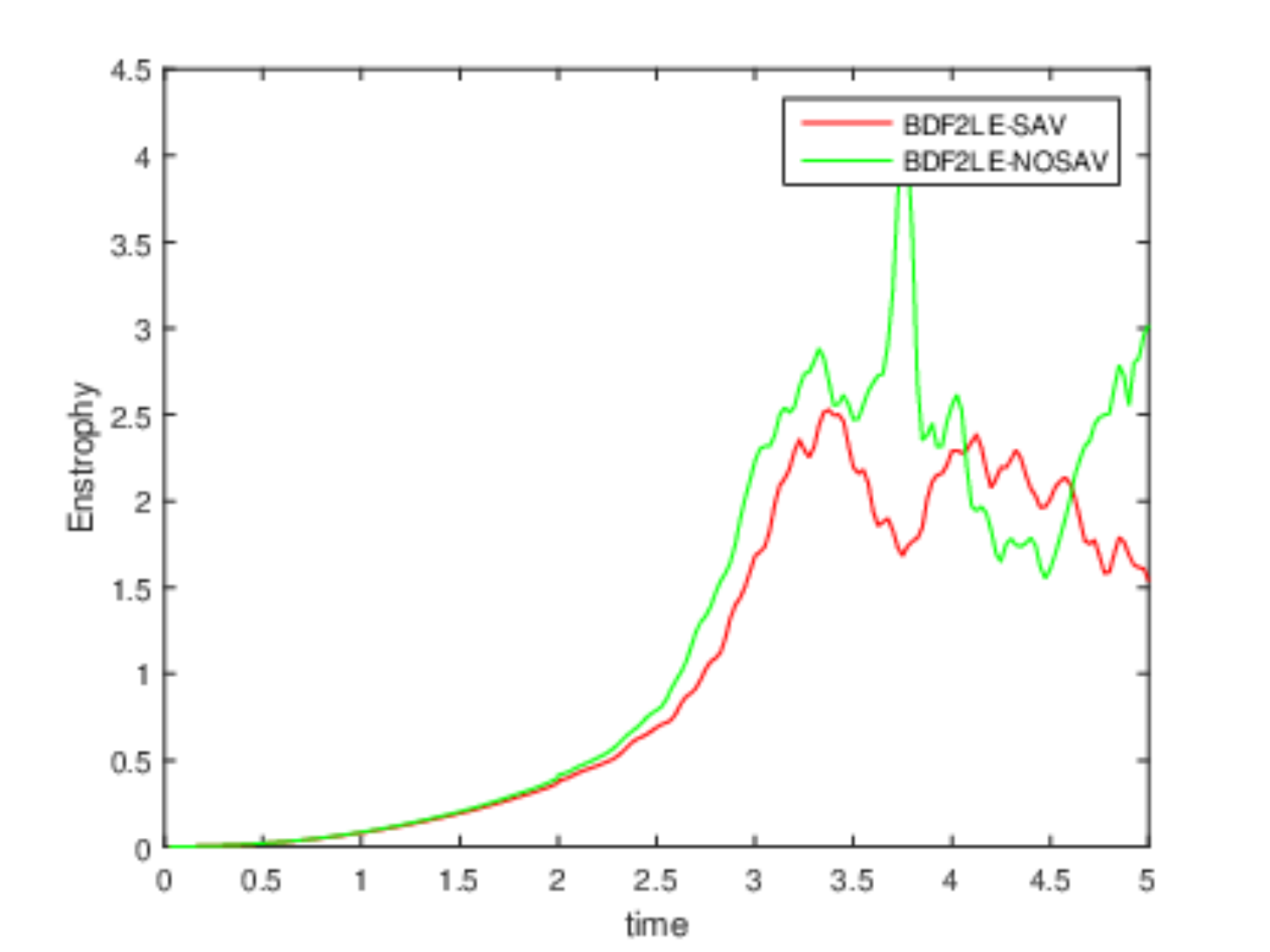}
	\end{subfigure}
	\caption{Time evolutions of enstrophy for $Re=200,800,1200$ from left to right. }
	\label{fig:Enstrophies}
\end{figure}
\section{Conclusion}
In this report, we introduced and analyzed SAV regularization method to find solutions to NSE with BDF2LE scheme. This stabilization is effective only for small scales in fluid flow. We have found that the solutions of the proposed algorithm preserve both energy and helicity identities. We have always obtained smooth and regular bounded solutions which don't require any condition on time step size. We also proved that the method is optimally convergent with suitable choices of the artificial viscosity and the grad-div stabilization parameter. Moreover, several numerical tests were performed verifying the theoretical findings and showed that the method provide better solutions over the unstabilized NSE. The incorporation of this type stabilization method into the framework for the design of adaptive algorithm is a subject of future research. In this way, the large scale space is determined posteriorly and it will be chosen differently on different mesh cells. 

\bibliographystyle{amsplain}

\bibliography{reference}

\providecommand{\bysame}{\leavevmode\hbox to3em{\hrulefill}\thinspace}
\providecommand{\MR}{\relax\ifhmode\unskip\space\fi MR }
\providecommand{\MRhref}[2]{%
  \href{http://www.ams.org/mathscinet-getitem?mr=#1}{#2}
}
\providecommand{\href}[2]{#2}
\begin{thebibliography}{10}

\bibitem{A75}
R.~Adams, Sobolev spaces, Academic Press, New York, (1975).
\bibitem{TAM95}
R.~Temam, Navier-{Stokes} {Equations}, North-Holland, Amsterdam, 1977.
\bibitem{20}
G. A.~Baker, V. A.~Dougalis and O.~Karakashian, On a higher order accurate
  fully discrete Galerkin approximation to the Navier-Stokes equations, Math.
  Comput. \textbf{39} (1982), 339--375.

\bibitem{G}
K. J.~Galvin, New subgrid artificial viscosity {G}alerkin methods for the
  {N}avier-{S}tokes equations, Comput. Methods Appl. Mech. Engrg. \textbf{200}
  (2011), 242--250.

\bibitem{GR79}
V.~Girault and P. A.~Raviart, Finite Element Approximation of the
  {Navier}-{Stokes} Equations, Lecture Notes in Math. (1979).

\bibitem{21}
V.~Girault and P. A.~Raviart, Finite Element Methods for Navier-Stokes
  Equations: Theory and Applications, Springer Ser. Comput. Math. \textbf{5} (1986).

\bibitem{19}
E.~Hairer and G.~Wanner, Solving Ordinary Differential Equations II: Stiff and
  Differential-Algebraic Problems, Springer Ser. Comput. Math. \textbf{14} (1996).

\bibitem{hec}
F.~Hecht, New development in Freefem++, J. Numer. Math. \textbf{20}
  (2012), 251--265.

\bibitem{HMJ}
T.~Hughes, K.~Jansen and L.~Mazzei, Large Eddy Simulation and the
  variational multiscale method, Comput. Vis. Sci. \textbf{3} (2000), 47--59.
  \bibitem{FH88}
L.~P.~Franca and T.~J.~R.~Hughes, Two classes of mixed finite
element methods, Comput. Methods Appl. Mech. Engrg. \textbf{69(1)} (1988), 89--129.

\bibitem{22}
W.~Hundsdorfer, Partially Implicit BDF2 Blends for Convection Dominated
  Flows, SIAM J. Numer. Anal. \textbf{38} (2001), 1763--1783.

\bibitem{NL}
N.~Jiang and W.~Layton, An algorithm for fast calculation of flow
  ensembles, Int. J. Uncertain. Quan. \textbf{4} (2014), 273--301.

\bibitem{JLG}
J. L.~Guermond, Stabilization of Galerkin approximations of transport
  equations by subgrid modeling, ESAIM: M2AN \textbf{33} (1999), 1293--1316.
  
  \bibitem{LW}
J.~Liu and W.~Wang, Energy and helicity preserving schemes for hydro and magnetohydro-dynamics flows with symmetry, J. Comput. Phys. \textbf{200} (2000), 8--33.

\bibitem{J}
V.~John, Large Eddy Simulation of Turbulent Incompressible Flows.
  Analytical and Numerical Results for a Class of {LES} Models,  Lecture  Notes  in  Computational Science and Engineering, Springer-Verlag, Berlin, \textbf{34},
   (2004).

\bibitem{J04}
\bysame, Reference values for drag and lift of a two-dimensional
  time-dependent flow around the cylinder, Int. . Numer. Meth. Fl. \textbf{44}
  (2004), 777--788.

\bibitem{VS}
V.~John and S.~Kaya, A finite element variational multiscale method for
  the {N}avier-{S}tokes equations, SIAM J. Sci. Comput. \textbf{26(5)} (2005),
  1485--1503.

\bibitem{LJ}
V.~John and W.~Layton, Analysis of numerical errors in large eddy
  simulation, SIAM J. Numer. Anal. \textbf{40} (2002), 995--1020.

\bibitem{L}
W.~Layton, A connection between subgrid scale eddy viscosity and mixed
  methods, Appl. Math. Comput. \textbf{133} (2002), 147--157.

\bibitem{Layton}
\bysame, Introduction to the {Numerical} {Analysis} of incompressible
  viscous flows, SIAM, 2008.

\bibitem{MSL}
 M.~Aggul, S.~Kaya and A. E.~Labovsky, Two approaches to creating a
  turbulence model with increased temporal accuracy, Appl. Math. Comput.
  \textbf{358} (2019), 25--36.

\bibitem{28}
 M.~Akbas, S.~Kaya and L. G.~Rebholz, On the stability at all times of
  linearly extrapolated BDF2 timestepping for multiphysics incompressible flow
  problems, Numer. Methods Partial Differ. Equ. \textbf{33} (2017), 999--1017.

\bibitem{V17}
 N.~Ahmed, T. C.~Rebollo, V.~John and S.~Rubino, A review of variational
  multiscale methods for the simulation of turbulent incompressible flows,
  Arch. Comput. Methods Engrg. \textbf{24} (2017), 115--164.

\bibitem{OR}
M. A.~Olshanskii and L.G.~Rebholz, A note on helicity balance of the
  Galerkin method for the 3{D} {N}avier-{S}tokes equations, Comput. Methods
  Appl. Mech. Eng. \textbf{199} (2010), 1032--1035.

\bibitem{24}
S. S.~Ravindran, Convergence of Extrapolated BDF2 Finite Element Schemes
  for Unsteady Penetrative Convection Model, Numer. Funct. Anal. Optim.
  \textbf{33} (2012), 48--79.

\bibitem{26}
\bysame, An Extrapolated Second Order Backward Difference Time-Stepping
  Scheme for the Magnetohydrodynamics System, Numer. Funct. Anal. Optim.
  \textbf{37} (2016), 990--1020.

\bibitem{25}
\bysame, A second-order backward difference time-stepping scheme for
  penalized Navier-Stokes equations modeling filtration through porous media,
  Numer. Methods Partial Differ. Equ. \textbf{32} (2016), 681--705.

\bibitem{L07}
L. G.~Rebholz, An energy and helicity conserving finite element scheme for
  the {N}avier-{S}tokes equations, SIAM J. Numer. Anal. \textbf{45 (4)}
  (2007), 1622--1638.

\bibitem{2}
L. G.~Rebholz, Helicity and physical fidelity in turbulence modeling,
  Ph.D. thesis, University of Pittsburgh, (2006).

\bibitem{MS}
M.~Shafer and S.~Turek, The benchmark problem ’flow around a
  cylinder’ Flow Simulation with High-Performance Computers II, in
  E.H.{H}irschel, (Ed.), Notes Numer. Fluid Mech.
  \textbf{52} (1996), Braunschweig, Vieweg, 547--566.

\bibitem{VKL}
 V.~John, S.~Kaya and W.~Layton, A two-level variational multiscale method
  for convection-dominated convection–diffusion equations, Comp. Meth. Appl.
  Math. Eng. \textbf{195} (2006), 4594--4603.

\bibitem{23}
J. M.~Varah, Stability Restrictions on Second Order, Three Level Finite
  Difference Schemes for Parabolic Equations, SIAM J. Numer. Anal. \textbf{17}
  (1980), 300--309.

\bibitem{27}
 W.~Layton, H.~Tran and C.~Trenchea, Numerical analysis of two partitioned
  methods for uncoupling evolutionary MHD flows, Numer. Methods Partial
  Differ. Equ. \textbf{30} (2014), 1083--1102.

\end{thebibliography}
\begin{appendices}
\section{Error Analysis}

We now prove the statement of Theorem \ref{error}.
	\begin{proof}
		To obtain the error equation, denote $\bu(t^n)=\bu^n$. Then, the true solutions $(\bu_h^{n+1},p_h^{n+1},S_H^{n+1})$ at time level $t^{n+1}$ satisfy
		\begin{eqnarray}
		\bigg(\dfrac{3\bu^{n+1}-4\bu^n+\bu^{n-1}}{2\Delta t},\bv_h\bigg)+\nu\big(\nabla \bu^{n+1},\nabla \bv_h\big)
		+b\big(2\bu^{n}-\bu^{n-1},\bu^{n+1},\bv_h\big)\nonumber\\+\alpha_1\big(\nabla\times \bu^{n+1},\nabla\times \bv_h\big)-\alpha_1\big(\nabla\times \bu^{n+1},\nabla\times \bv_h\big)
		+\alpha_2\big(\nabla \cdot \bu^{n+1},\nabla\cdot \bv_h\big)\nonumber\\-\big(p^{n+1},\nabla\cdot\bv_h)
		=\big(f^{n+1},\bv_h\big)+Intp(\bu^{n+1};\bv_h)\label{er}
		\end{eqnarray}
		where the local truncation error is
		\begin{eqnarray}
		Intp(\bu^{n+1},\bv_h)=\bigg(\dfrac{3\bu^{n+1}-4\bu^n+\bu^{n-1}}{2\Delta t}-\bu_t^{n+1},\bv_h\bigg)+b\big(2\bu^{n}-\bu^{n-1},\bu^{n+1},\bv_h\big)-b\big(\bu^{n+1},\bu^{n+1},\bv_h\big).\nonumber
		\end{eqnarray}
		 Subtracting the equation  (\ref{vh1}) from (\ref{er}) yields
		\begin{eqnarray}
	\bigg(\dfrac{3\be^{n+1}-4\be^n+\be^{n-1}}{2\Delta t},\bv_h\bigg)+\nu(\nabla \be^{n+1},\nabla \bv_h)
			+b(2\bu^{n}-\bu^{n-1},\bu^{n+1},\bv_h)\nonumber\\
			-b(2\bu_h^{n}-\bu_h^{n-1},\bu_h^{n+1},\bv_h)
		+\alpha_1(\nabla\times \be^{n+1},\nabla \times \bv_h)	+\alpha_2(\nabla\cdot \be^{n+1},\nabla\cdot \bv_h)\nonumber\\=\alpha_1(\nabla\times \bu^{n+1},\nabla\times \bv_h)+(p^{n+1},\nabla\cdot \bv_h)\nonumber\\-\alpha_1(S_H^{n+1},\nabla \times \bv_h)+Intp(\bu^{n+1};\bv_h)\label{65}
		\end{eqnarray}
	Adding and subtracting terms for the  convective terms, by using the properties (\ref{d1}) and (\ref{d2}), one gets  
		\begin{eqnarray}
		&&b(2\bu^{n}-\bu^{n-1},\bu^{n+1},\bv_h)
			-b(2\bu_h^{n}-\bu_h^{n-1},\bu_h^{n+1},\bv_h)\nonumber\\
		&&=b(2\bu^{n}-\bu^{n-1},\bu^{n+1},\bv_h)
		-b(2\bu^{n}-\bu^{n-1},\bu_h^{n+1},\bv_h)\nonumber\\
		&&+b(2\bu^{n}-\bu^{n-1},\bu_h^{n+1},\bv_h)
		-b(2\bu_h^{n}-\bu_h^{n-1},\bu_h^{n+1},\bv_h)\nonumber\\
		&&=b(2\bu^{n}-\bu^{n-1},\be^{n+1},\bv_h)
		+b(2\be^{n}-\be^{n-1},\bu_h^{n+1},\bv_h).\label{ctrm}
		\end{eqnarray}
Decompose the velocity error in the usual way:
		\begin{eqnarray}
		\be^n=\big(\bu^n-I^h\bu^n\big)+\big(I^h\bu^n-\bu_h^n\big)= \bfeta^n+\bphi_h^n,\label{se}
		\end{eqnarray}
		where $I^h\bu^n$ is an interpolant of $\bu^n$ in $V_h$.\\
		Using error decomposition, $\bv_h=\phi_h^{n+1}$ in the (\ref{65}) and  (\ref{d1})-(\ref{d2}) yields
			\begin{eqnarray}
			\lefteqn{\dfrac{1}{4\Delta t}\bigg[\|\bphi_h^{n+1}\|^2+\|2\bphi_h^{n+1}-\bphi_h^{n}\|^2\bigg] -\dfrac{1}{4\Delta t}\bigg[\|\bphi_h^n\|^2+\|2\bphi_h^n-\bphi_h^{n-1}\|^2\bigg]+\dfrac{1}{4\Delta t}\|\bphi_h^{n+1}-2\bphi_h^n+\bphi_h^{n-1}\|^2}\nonumber\\
			&&+\nu\| \nabla \bphi_h^{n+1}\|^2
			+\alpha_1\|\nabla \times \bphi_h^{n+1}\|^2+\alpha_2\|\nabla \cdot \bphi_h^{n+1}\|^2\nonumber\\
			&=&-\bigg(\dfrac{3\bfeta^{n+1}-4\bfeta^n+\bfeta^{n-1}}{2\Delta t},\bphi_h^{n+1}\bigg)
			-\nu(\nabla \bfeta^{n+1},\nabla\bphi_h^{n+1})-\alpha_1(\nabla\times \bfeta^{n+1},\nabla\times \bphi_h^{n+1})\nonumber\\
			&&-\alpha_2(\nabla \cdot \bfeta^{n+1},\nabla\cdot \bphi_h^{n+1})-b(2\bu^{n}-\bu^{n-1},\bfeta^{n+1},\bphi^{n+1}_h)
			-b(2\bfeta^{n}-\bfeta^{n-1},\bu_h^{n+1},\bphi^{n+1}_h)\nonumber\\
			&&-b(2\bphi_h^{n}-\bphi_h^{n-1},\bu_h^{n+1},\bphi^{n+1}_h)+\alpha_1(\nabla\times \bu^{n+1}-S_H^{n+1},\nabla\times \bphi^{n+1}_h)\nonumber\\&&+(p^{n+1}-q_h,\nabla\cdot \bphi^{n+1}_h)+Intp(\bu^{n+1};\bphi^{n+1}_h)\label{d}
			\end{eqnarray}
The terms on the right hand side of (\ref{d}) have to be bounded. For the first term, applying Cauchy-Schwarz, Poincar\'e-Friedrichs inequalities, fundamental theorem of calculus and Young's inequality, one gets 
			\begin{eqnarray}
			\bigg|-\bigg(\dfrac{3\bfeta^{n+1}-4\bfeta^n+\bfeta^{n-1}}{2\Delta t},\bphi_h^{n+1}\bigg)\bigg|
			&\leq&\norm{\dfrac{3\bfeta^{n+1}-4\bfeta^n+\bfeta^{n-1}}{2\Delta t}}\norm{\nabla\bphi_h^{n+1}}\nonumber\\
			&\leq& \dfrac{C\nu^{-1}}{\Delta t}\int_{t^{n-1}}^{t^{n+1}}\|\bfeta_t\|^2dt+\dfrac{\nu}{16}\|\nabla \bphi_h^{n+1}\|^2 \label{t1}
			\end{eqnarray}
The next couple estimates will use Cauchy-Schwarz and Young's inequalities and they will also contribute to the error bound. One obtains in a straightforward way
			\begin{eqnarray}
			\big|-\nu(\nabla\bfeta^{n+1},\nabla\bphi_h^{n+1})\big|
			&\leq& C\nu\|\nabla\bfeta^{n+1}\|^2+\dfrac{\nu}{16}\|\nabla\bphi_h^{n+1}\|^2,\label{t5}
		\\
			\big|-\alpha_1(\nabla\times \eta^{n+1},\nabla\times \bphi_h^{n+1})\big|	&\leq& C\nu^{-1}\alpha_1^2\|\nabla \bfeta^{n+1}\|^2+\dfrac{\nu}{16}\|\nabla \bphi_h^{n+1}\|^2,\label{t6}
			\\
			\big|-\alpha_2(\nabla\cdot\bfeta^{n+1},\nabla\cdot \bphi_h^{n+1})\big|
			&\leq&C\alpha_2\|\nabla\bfeta^{n+1}\|^2+\dfrac{\alpha_2}{2}\|\nabla\cdot \bphi_h^{n+1}\|^2 .\label{t7}
			\end{eqnarray}		
We proceed to bound the convective terms using Cauchy-Schwarz, Poincar\'e-Friedrichs, Young's inequalities and the estimation (\ref{n3}):
			\begin{eqnarray}
			\big| -b(2\bu^{n}-\bu^{n-1},\bfeta^{n+1},\phi^{n+1}_h)\big|&\leq&C\|\nabla (2\bu^{n}-\bu^{n-1})\|\|\nabla \bfeta^{n+1}\|\|\nabla \bphi^{n+1}_h\|\nonumber\\
			&\leq& C \big(\|\nabla \bu^{n}\|+\|\nabla \bu^{n-1}\|\big)\|\nabla \bfeta^{n+1}\|\|\nabla \bphi^{n+1}_h\|\nonumber\\
			&\leq& C\nu^{-1}\|\nabla \bfeta^{n+1}\|^2\big(\|\nabla \bu^{n}\|^2+\|\nabla \bu^{n-1}\|^2\big)\nonumber\\
			&+&\dfrac{\nu}{16}\|\nabla \bphi^h_{n+1}\|^2, \label{t2}
			\end{eqnarray}
			and
			\begin{eqnarray}
			\big|-b(2\bfeta^{n}-\bfeta^{n-1},\bu_h^{n+1},\bphi^{n+1}_h))\big|
			&\leq& C\nu^{-1}\|\nabla \bu_h^{n+1}\|^2\big(\|\nabla \bfeta^{n}\|^2+\|\nabla  \eta^{n-1}\|^2\big)\nonumber\\
			&+&\dfrac{\nu}{16}\|\nabla \bphi^h_{n+1}\|^2.\label{t3}
			\end{eqnarray}
			Using (\ref{n2}), one gets for the last convective term
			\begin{eqnarray}
			\big| -b(2\bphi_h^{n}-\bphi_h^{n-1},\bu_h^{n+1},\bphi^{n+1}_h)\big|&\leq&\dfrac{1}{2}\bigg(\norm{2\bphi_h^{n}-\bphi_h^{n-1}}\|\nabla\bu_h^{n+1}\|_{\infty}\|\nabla \bphi^{n+1}_h\|\nonumber\\&&+\norm{2\bphi_h^{n}-\bphi_h^{n-1}}\norm{\bu_h^{n+1}}_{\infty}\|\|\nabla \phi^{n+1}_h\|\bigg)\nonumber\\
			&\leq& C\big(\| \bphi_h^{n}\|+\|  \bphi_h^{n-1}\|\big)\|\nabla \bphi_h^{n+1}\|\big(\|\nabla \bu^{n+1}_h\|_{\infty}+\norm{\bu_h^{n+1}}_{\infty}\big)\nonumber\\
			&\leq & C\nu^{-1}\big(\| \bphi_h^{n}\|^2+\|  \bphi_h^{n-1}\|^2\big)\big(\|\nabla \bu^{n+1}_h\|^2_{\infty}+\norm{\bu_h^{n+1}}_{\infty}^2\big)\nonumber\\&&+\dfrac{\nu}{16}\norm{\nabla\bphi_h^{n+1}}^2. \label{t4}
			\end{eqnarray}
		To bound the pressure term, we use the fact that $(\nabla\cdot\phi_h,q_h)=0$, $\forall q_h \in \bfV_h$ together with Cauchy-Schwarz and Young's inequalities:  
			\begin{eqnarray}
			\big|p(t^{n+1}),\nabla\cdot \bphi_h^{n+1})\big|
			&\leq& C\alpha_2^{-1}\big\|\inf_{q_h\in Q_h}\big\|p(t^{n+1})-q_h\big\|^2+\dfrac{\alpha_2}{4}\|\nabla\cdot \bphi_h^{n+1}\|^2.\label{t8}
			\end{eqnarray}
			Next, we bound the coarse mesh projection term. Using the definition of the $L^2$-projection operator $P_{L_H}$ $(\ref{cm})$ and from (\ref{vh3}), one can write $S_H^{n+1}=P_{L_H}(\nabla\times\bu^n_h)$. Then, we add and subtract  $P_{L_H}(\nabla\times\bu^n)$ and $\nabla\times\bu^n$ to the coarse mesh projection term and error definition, one gets
			\begin{eqnarray}
			\lefteqn{\big|\alpha_1\Big(\nabla \times \bu^{n+1}-S_H^{n+1},\nabla\times \bphi_h^{n+1}\Big)\big|}\nonumber\\
			&&= \big|\alpha_1 \big (P_{L_H}(\nabla\times \be^{n})+(I-P_{L_H})(\nabla\times \bu^{n})+(\nabla\times(\bu^{n+1}-\bu^n)),\nabla\times\bphi_h^{n+1}\big)\big|.\nonumber
		\end{eqnarray}
	Using error decomposition (\ref{se}), Cauchy-Schwarz and Young's inequalities, inverse estimation and the bound (\ref{5}) yields
		\begin{eqnarray}
		 \lefteqn{\big|\alpha_1 \big (P_{L_H}(\nabla\times \be^{n})+(I-P_{L_H})(\nabla\times \bu^{n})+(\nabla\times(\bu^{n+1}-\bu^n))\big),\nabla\times\bphi_h^{n+1}\big|}\nonumber\\
			&&\leq C\nu^{-1}\alpha_1^2\big\|P_{L_H}(\nabla\times\bfeta^n)\|^2 +C\nu^{-1}\alpha_1^2\|P_{L_H}(\nabla\times\bphi_h^n)\|^2\nonumber\\
			&&+C\nu^{-1}\alpha_1^2\|(I-P_{L_H})(\nabla\times \bu^{n})\|^2+C\nu^{-1}\alpha_1^2\|\nabla\times(\bu^{n+1}-\bu^n)\|^2+\dfrac{\nu}{16}\norm{\nabla \times \bphi_h^{n+1}}^2 \nonumber\\	
					&&\leq C\nu^{-1}\alpha_1^2\|\nabla\bfeta^n\|^2+C\nu^{-1}\alpha_1^2h^{-2}\|\bphi_h^n\|^2 +C\nu^{-1}\alpha_1^2\|(I-P_{L_H})(\nabla\times \bu^{n})\|^2\nonumber\\
			&&+C\nu^{-1}\alpha_1^2\|\nabla\times(\bu^{n+1}-\bu^n)\|^2+\dfrac{\nu}{16}\|\nabla\bphi_h^{n+1}\|^2.\label{t9}
			\end{eqnarray}
			Finally, the local truncation error $Intp(\bu^{n+1};\bphi_h^{n+1})$ can be bounded as follows. The first term of $Intp(\bu^{n+1};\bphi_h^{n+1})$ is estimated by using Cauchy-Schwarz, Poincar\'e  Friedrichs, Young's and H\"{o}lder's inequalities together with the integral remainder form of Taylor's theorem 
			\begin{eqnarray}
		\lefteqn{\bigg|-\bigg(\bu_t(t^{n+1})-\dfrac{3\bu(t^{n+1})-4\bu(t^n)+\bu(t^{n-1})}{2\Delta t},\bphi_h^{n+1}\bigg)\bigg|}\nonumber\\
			&\leq &\norm{\dfrac{3\bu(t^{n+1})-4\bu(t^n)+\bu(t^{n-1})}{2\Delta t}-\bu_t(t^{n+1})}\norm{\bphi_h^{n+1}}\nonumber\\
			&\leq& C\Delta t^{{3}}\nu^{-1}\int_{t_{n-1}}^{t_{n+1}}\|\bu_{ttt}\|^2dt+\dfrac{\nu}{16}\|\nabla \bphi_h^{n+1}\|^2. \label{t10}
			\end{eqnarray}
To bound the convective terms in $Intp(\bu^{n+1};\bphi_h^{n+1})$, we first rearrange the terms. Using the bound (\ref{n3}) and applying  Cauchy-Schwarz, Young's and H\"{o}lder's inequalities together with the Taylor's theorem with integral remainder, we get
			\begin{eqnarray}
			\lefteqn{b\big(2\bu^{n}-\bu^{n-1},\bu^{n+1},\bv_h\big)-b\big(\bu^{n+1},\bu^{n+1},\bv_h\big)}\nonumber\\
			&\leq&C\norm{\nabla(2\bu^{n}-\bu^{n-1}-\bu^{n+1})}\norm{\nabla\bu^{n+1}}\norm{\bphi_h^{n+1}}\nonumber\\
			&\leq& C\Delta t^{{3}}\nu^{-1}\norm{\nabla\bu^{n+1}}^2\int_{t_{n-1}}^{t_{n+1}}\|\nabla\bu_{tt}\|^2dt+\dfrac{\nu}{16}\|\nabla \bphi_h^{n+1}\|^2.\label{t11}
			\end{eqnarray} 
			Collecting all estimates (\ref{t1})-(\ref{t11}) and the equality (\ref{d}) simplifies to
			\begin{eqnarray}
	\lefteqn{	\dfrac{1}{4\Delta t}\bigg[\|\bphi_h^{n+1}\|^2-\|\bphi_h^n\|^2\bigg]+ \dfrac{1}{4\Delta t}\bigg[\|2\bphi_h^{n+1}-\bphi_h^{n}\|^2-\|2\bphi_h^n-\bphi_h^{n-1}\|^2\bigg]+\dfrac{1}{4\Delta t}\|\bphi_h^{n+1}-2\bphi_h^n+\bphi_h^{n-1}\|^2}\nonumber\\&&+\dfrac{7\nu}{16}\| \nabla \bphi_h^{n+1}\|^2 +\alpha_1\norm{\nabla\times\bphi_h^{n+1}}^2+\dfrac{\alpha_2}{4}\|\nabla \cdot \bphi_h^{n+1}\|^2\nonumber\\
			&\leq&\dfrac{C\nu^{-1}}{\Delta t}\int_{t^{n-1}}^{t^{n+1}}\|\bfeta_t\|^2dt+C\nu^{-1}\|\nabla \eta^{n+1}\|^2\big(\|\nabla \bu^{n}\|^2+\|\nabla \bu^{n-1}\|^2\big)\nonumber\\
			&&+ C\nu^{-1}\|\nabla \bu_h^{n+1}\|^2\big(\|\nabla \bfeta^{n}\|^2+\|\nabla  \bfeta^{n-1}\|^2\big)+ C\nu^{-1}\big(\| \bphi_h^{n}\|^2+\| \bphi_h^{n-1}\|^2\big)\big(\|\nabla \bu^{n+1}_h\|^2_{\infty}+\norm{\bu_h^{n+1}}_{\infty}^2\big)\nonumber\\
			&&+C\nu\|\nabla\bfeta^{n+1}\|^2+C\alpha_2^{-1}\big\|\inf_{q_h\in Q_h}\big\|p(t^{n+1})-q_h\big\|^2+C\nu^{-1}\alpha_1^2\|\big(\nabla\bfeta^n)\|^2\nonumber\\
			&&+C\nu^{-1}\alpha_1^2h^{-2}\|\bphi_h^n\|^2 +C\nu^{-1}\alpha_1^2\|(I-P_{L_H})(\nabla\times \bu^{n})\|^2+C\nu^{-1}\alpha_1^2\|\nabla\times(\bu^{n+1}-\bu^n)\|^2\nonumber\\
			&&+C\Delta t^{{3}}\nu^{-1}\int_{t_{n-1}}^{t_{n+1}}\|\bu_{ttt}\|^2dt+ C\Delta t^{{3}}\nu^{-1}\norm{\nabla\bu^{n+1}}^2\int_{t_{n-1}}^{t_{n+1}}\|\nabla\bu_{tt}\|^2dt .\label{A}
			\end{eqnarray}
	 Multiplication of each term by $4\Delta t$ and summation from $n=1$ to $n=N-1$ and using approximation properties (\ref{ap1})-(\ref{ap10}) and (\ref{pa}) to obtain
			\begin{eqnarray}
			\lefteqn{\|\bphi_h^{N}\|^2+ \|2\bphi_h^{N}-\bphi_h^{N-1}\|^2+\sum_{n=1}^{N-1}\|\bphi_h^{n+1}-2\bphi_h^n+\bphi_h^{n-1}\|^2}\nonumber\\
			&&+\Delta t\sum_{n=1}^{N-1}(\nu\norm{\nabla\bphi_h^{n+1}}^2+\alpha_1\norm{\nabla\times\bphi_h^{n+1}}^2+\alpha_2\norm{\nabla\cdot\bphi_h^{n+1}}^2)\nonumber\\
			&\leq&\|\bphi_h^1\|^2+\|2\bphi_h^1-\bphi_h^{0}\|^2+C\bigg(\nu^{-1}h^{2k+2}\norm{|\bu_t|}^2_{2,k+1}+\nu^{-1}h^{2k}\norm{|\bu|}^2_{2,k+1}\norm{|\nabla \bu|}^2_{\infty,0}+\nu h^{2k}\norm{|\bu|}^2_{2,k+1}\nonumber\\
			&&+\alpha_2^{-1}h^{2k+2}\norm{|p|}^2_{2,k+1}+\nu^{-1}\alpha_1^2h^{2k}\norm{|\bu|}^2_{2,k+1}+\nu^{-1}\alpha_1^2H^{2k}\norm{|\bu|}^2_{2,k+1}+\nu^{-1}\alpha_1^2\Delta t^2\norm{|\bu_t|}^2_{\infty,0}\nonumber\\
			&&+\nu^{-1}\Delta t^4\norm{|\bu_{ttt}|}^2_{2,0}+\nu^{-1}\Delta t^{{4}}\norm{|\nabla\bu|}^2_{\infty,0}\norm{|\nabla\bu_{tt}|}^2\bigg) +C\Delta t\nu^{-1}(1+\alpha_1^2h^{-2})\sum_{n=1}^{N-1}\norm{\bphi_h^n}^2.\nonumber
			\end{eqnarray}
 Lastly, the required result is proved by applying Lemma \ref{gl} and the triangle inequality to the splitting of the errors.
		\end{proof}
\end{appendices}

\end{document}